\renewcommand{\phi}[0]{\varphi}
\renewcommand{\theta}[0]{\vartheta}
\renewcommand{\epsilon}[0]{\varepsilon}
\newcommand{\N}{\text{$\mathbf{N}$}}
\newcommand{\Z}{\text{$\mathbf{Z}$}}
\newcommand{\Q}{\text{$\mathbf{Q}$}}
\newcommand{\Pro}{\text{$\mathbf{P}^1$}}
\newcommand{\F}{\text{$\mathbf{F}$}}
\DeclareMathOperator{\E5}{End_{\F_5}}
\newtheorem{theorem}{Theorem}[section]
\newtheorem{lemma}[theorem]{Lemma}
\newtheorem{corollary}[theorem]{Corollary}
\theoremstyle{definition}
\newtheorem{example}[theorem]{Example}
\theoremstyle{remark}
\numberwithin{equation}{section}
\begin{document}

\bibliographystyle{amsalpha}

\date{}

\title[GS]
{Graphs associated with the map $X \mapsto X + X^{-1}$
in finite fields of characteristic  five}

\author{S.~Ugolini}
\email{sugolini@gmail.com} 

\begin{abstract}
In \cite{SU2} we described the structure of the graphs associated with the iterations of the map $x \mapsto x+x^{-1}$ over finite fields of characteristic two. In this paper we extend our study to finite fields of characteristic five. 
\end{abstract}

\maketitle
\section{Introduction}
Let $\F_q$ be a finite field with $q$ elements for some positive integer $q$. We can define a map $\theta$ on $\Pro (\F_q) = \F_q \cup \{\infty \}$ in such a way:
\begin{displaymath}
\theta(x) =   
\begin{cases}
x+x^{-1} & \text{if $x \not =0, \infty$}\\
\infty & \text{if $x = 0$ or $\infty$}
\end{cases}
\end{displaymath}

We associate a graph with the map $\theta$ over $\F_{q}$, labelling the vertices of the graph by the elements of $\Pro (\F_{q})$. Moreover, if $\alpha \in \Pro (\F_{q})$ and $\beta = \theta(\alpha)$, then a directed edge connects the vertex $\alpha$ with the vertex $\beta$.
If $\gamma \in \Pro (\F_q)$ and $\theta^k (\gamma) = \gamma$, for some positive integer $k$, then $\gamma$ belongs to a cycle of length $k$ or a divisor of $k$. The smallest among these integers $k$ is the period $l$ of $\gamma$ with respect to the map $\theta$ and the set $\{\theta^i (\gamma) : 0 \leq i < l \}$ is the cycle of length $l$ containing $\gamma$.
An element $\gamma$ belonging to a cycle can be the root of a reverse-directed tree, provided that $\gamma = \theta (\alpha)$, for some $\alpha$ which is not contained in any cycle. 

In \cite{SU2} we dealt with the characteristic $2$ case. There we noticed that the map $\theta$ is strictly related to the duplication map over Koblitz curves. Later we carried out some experiments in characteristics greater than $5$, but the resulting graphs seemed not to present notable symmetries.

In characteristics $3$ and $5$, in analogy with our previous work \cite{SU2}, the graphs exhibit remarkable symmetries.  In this paper we present the characteristic $5$ case.

\section{Structure of the graphs in characteristic five}
Consider the elliptic curve $E$ defined by the following equation over the field $\F_5$ with 5 elements:
\begin{equation}
y^2 = x^3+x.
\end{equation}
As showed in Chapter 25, Example 25.1.5 of \cite{Gal} the map 
\begin{equation}
\phi(x,y) = \left( \frac{x^2+1}{x}, y \frac{x^2-1}{x^2} \right)
\end{equation}
is an isogeny from $E$ to $E$ itself. 
Its dual isogeny  is 
\begin{equation}
\overline{\phi} (x,y) = \left( \frac{-x^2-1}{x}, 2y \frac{x^2-1}{x^2} \right),
\end{equation}

while the duplication map $[2]$ over $E$ is defined as
\begin{equation}
[2] (x,y) = \left(\frac{x^4+3x^2+1}{4(x^3+x)}, \frac{x^6-1}{3y (x^3+x)}  \right).
\end{equation}

Indeed the reader can check that $[2](x,y) = \phi(\overline{\phi} (x,y))$.

Let $\F_{5^n}$ be the field with $5^n$ elements for some positive integer $n$. 
We notice that, if $P = (x, y) \in E(\F_{5^n})$ is a rational point of the curve $E$ over $\F_{5^n}$, then $\theta(x)$ is the $x$-coordinate of the point $\phi(P)$. 

As a first step in our work we study the endomorphism ring $\E5(E)$ of the curve $E$. We note that $E$ is an ordinary elliptic curve (see \cite{was}). Therefore, by Theorem 2.4 of \cite{wi},  $\E5(E)$ is an order in an imaginary quadratic field. Among the endomorphisms of $\E5(E)$ there is the Frobenius endomorphism $\pi_5$, which takes a point $(x,y)$ to $(x^5,y^5)$. As a consequence of Theorem 2.4 in \cite{wi} the ring $\E5(E)$ is an order in $\Q (i)$ and the representation of the Frobenius endomorphism as an element of $\Q (i)$ is 
\begin{equation*}
\pi_5 = 1+2i.
\end{equation*}
The ring of integers of $\Q (i)$ is $\Z [i]$. The duplication map splits in $\Z [i]$ as 
\begin{equation*}
2 = (-1+i) (-1-i).
\end{equation*}
We have seen before that the duplication map $[2]$ is the composition of the maps $\phi$ and $\overline{\phi}$. Therefore, these two endomorphisms correspond to $(-1+i)$ and $(-1-i)$ in $\Z[i]$. Since $\E5(E)$ is contained in $\Z[i]$ and both the endomorphisms  $1+2i$ and $-1-i$ belongs to $\E5(E)$,  we conclude that $\E5(E) = \Z[i]$. From now on we will denote by $R$ the ring $\E5(E)$, namely $\Z[i]$.

Let $n$ be a fixed positive integer. 

If $n$ is odd we define the sets
\begin{eqnarray*}
A_n & = & \left\{x \in \F_{5^n}: (x,y) \in E(\F_{5^{n}}) \text{ for some $y \in \F_{5^{n}}$} \right\} \cup \{ \infty \} \\
B_n & = & \left\{x \in \F_{5^n}: (x,y) \in E(\F_{5^{2n}}) \text{ for some $y \in \F_{5^{2n}} \backslash \F_{5^n}$} \right\} \backslash \{1, -1 \}\\
C_n & = & \left\{1, -1 \right\}
\end{eqnarray*}

If, on the contrary, $n$ is even, then we define 
\begin{eqnarray*}
A_n & = & \left\{x \in \F_{5^n}: (x,y) \in E(\F_{5^{n}}) \text{ for some $y \in \F_{5^{n}}$} \right\} \cup \{ \infty \} \\
B_n & = & \left\{x \in \F_{5^n}: (x,y) \in E(\F_{5^{2n}}) \text{ for some $y \in \F_{5^{2n}} \backslash \F_{5^n}$} \right\} .
\end{eqnarray*}
We note in passing that, when $n$ is even,  $\{1, -1 \} \not \subseteq B_n$. 

The sets $A_n, B_n$ (and $C_n$, if $n$ is odd) form a partition of $\Pro(\F_{5^n})$. In fact, if $x \in \F_{5^n}$, then the equation $y^2 = x^3+x$ has two distinct non-zero solutions $\pm y$ in $\F_{5^{2n}}$ in the case that $x^3+x \not = 0$, while has just the solution $y=0$, if $x^3+x = 0$. Since $y \in \F_{5^n}$ if and only if $-y \in \F_{5^n}$, then $x \in A_n$ if and only if $x \not \in B_n$. In order to  prove this last claim, we have just to deal with the elements $\pm 1$ in the case $n$ is odd. Such elements belong to $C_n$ and, by definition, they do not belong to $B_n$. We can indeed show that $\pm 1 \not \in A_n$. In fact, the equations 
\begin{displaymath} 
y^2 = 2 \quad \text{and} \quad y^2 = 3
\end{displaymath} 
have solutions in $\F_{5^2} \backslash \F_5$. Since $\F_{5^2} \not \subseteq \F_{5^n}$ when $n$ is odd, we are done.

The following holds.
\begin{lemma}
The map $\theta$ acts separately on the sets $A_n$ and $B_n$, while sends elements of $C_n$ to $A_n$.
\end{lemma}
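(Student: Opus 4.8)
The plan is to transport the whole problem onto the curve $E$ and to use that $\phi$, being defined over $\F_5$, commutes with every power of the Frobenius $\pi_5$. For $x \in \Pro(\F_{5^n})$ I fix a point $P_x$ of $E$, rational over $\F_{5^{2n}}$, whose $x$-coordinate is $x$ (with $P_\infty = \mathcal{O}$), and recall that $\theta(x)$ is the $x$-coordinate of $\phi(P_x)$. Inspecting the formula for $\phi$, the image $\phi(P_x)$ fails to be a finite affine point precisely when $x \in \{0,\infty\}$, and there $\theta(x) = \infty \in A_n$; so I may assume $x \notin \{0,\infty\}$ throughout.

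First I would rephrase membership in the three sets in terms of the action of $\pi := \pi_5^{\,n}$, the Frobenius of $\F_{5^n}$. Since the $x$-coordinate of $P_x$ lies in $\F_{5^n}$ we have $\pi(P_x) = \pm P_x$; a short check then shows that $x \in A_n$ if and only if $\pi(P_x) = P_x$ (equivalently $x^3+x$ is $0$ or a square in $\F_{5^n}$), while in the remaining cases — that is, $x \in B_n$, or $x \in C_n$ when $n$ is odd — one has $\pi(P_x) = -P_x$ with $P_x$ not a $2$-torsion point (equivalently $x^3+x$ is a nonzero non-square in $\F_{5^n}$). I would also note once and for all that the nontrivial $2$-torsion points of $E$ are $(0,0)$, $(2,0)$, $(-2,0)$, all rational over $\F_5$, so their $x$-coordinates $0$, $2$, $-2$ lie in $A_n$.

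With this dictionary, the case $x \in A_n$ is immediate: $\pi(\phi(P_x)) = \phi(\pi(P_x)) = \phi(P_x)$ forces $\phi(P_x) \in E(\F_{5^n})$, hence $\theta(x) \in A_n$. The case $x \in C_n$ (so $n$ odd, $x = \pm1$) is a single substitution: $\phi(P_1) = (2,0)$ and $\phi(P_{-1}) = (-2,0)$, so $\theta(\pm1) = \pm2 \in A_n$. The real content is the case $x \in B_n$. Here $x \notin \{0,\pm1,\pm2,\infty\}$, since each of these lies in $A_n$ or $C_n$, so $\phi(P_x)$ is a genuine finite point, $\theta(x) = x(\phi(P_x))$, and $\pi(\phi(P_x)) = \phi(\pi(P_x)) = \phi(-P_x) = -\phi(P_x)$. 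By the dictionary, $\theta(x)$ then lands in $B_n$ or $C_n$ as soon as $\phi(P_x)$ is not $2$-torsion, and in $B_n$ itself once we also know $\theta(x) \neq \pm1$.

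Verifying those two exclusions is, I expect, the main obstacle, since it rests on small explicit computations rather than on the structural commutation argument. For the first: if $\phi(P_x)$ were $2$-torsion, then $\theta(x) = x(\phi(P_x)) \in \{0,2,-2\}$, yet solving $\frac{x^2+1}{x} \in \{0,2,-2\}$ forces $x \in \{2,-2,1,-1\}$, all excluded. For the second: $\theta(x) = \pm1$ forces $x^2 \mp x + 1 = 0$, and then $x^3+x = x(x^2+1)$ collapses, via $x^2 = \pm x - 1$, to $x^2$ in the $+$ case and to $-x^2 = (2x)^2$ in the $-$ case (using $-1 = 2^2$ in $\F_5$); in either case $x^3+x$ is a square in $\F_{5^n}$, contradicting $x \in B_n$. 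Assembling the three cases gives that $\theta$ maps $A_n$ and $B_n$ to themselves and sends $C_n$ into $A_n$.
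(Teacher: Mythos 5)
Your proof is correct, but it follows a genuinely different route from the paper's. The paper argues directly on the explicit formula for $\phi$: for $x\in B_n$ it observes that the $y$-coordinate of $\phi(x,y)$ is $y\cdot\frac{x^2-1}{x^2}$, i.e.\ $y$ times a nonzero scalar of $\F_{5^n}$ (nonzero because $\pm1\notin B_n$), so it stays in $\F_{5^{2n}}\setminus\F_{5^n}$ and $\theta(x)$ remains in $B_n$; the cases $A_n$ and $C_n$ are handled exactly as you do. You instead suppress the formula for $\phi$ entirely and use only that $\phi$ is an $\F_5$-rational isogeny commuting with $\pi=\pi_5^n$, together with the dictionary ``$x\in A_n \Leftrightarrow \pi(P_x)=P_x$'' versus ``$x\in B_n\cup C_n \Leftrightarrow \pi(P_x)=-P_x\neq P_x$'', i.e.\ the quadratic character of $x^3+x$. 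This is more structural and would transfer to any $\F_5$-rational isogeny, but it forces you to pay for the degenerate cases by hand: you must check that $\phi(P_x)$ is not $2$-torsion and that $\theta(x)\neq\pm1$, both of which the paper's coordinate computation absorbs automatically (and your two explicit verifications of these exclusions are correct: $\frac{x^2+1}{x}\in\{0,2,-2\}$ forces $x\in\{\pm1,\pm2\}$, and $\theta(x)=\pm1$ forces $x^3+x$ to be a square since $-1=2^2$ in $\F_5$). In fact your argument makes explicit a point the paper leaves implicit, namely why $\theta(x)$ cannot land in $C_n$ when $x\in B_n$ and $n$ is odd; the paper only addresses this later, when it notes that $x^2\pm x+1=0$ has no root in $\F_{5^n}$ for odd $n$. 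Both proofs are sound; yours trades the paper's one-line coordinate computation for a cleaner conceptual mechanism plus two small finite checks.
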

\begin{proof}
Firstly we notice that $\theta (\infty) = \infty \in A_n$. Consider now an element $x$ in $A_n$ different from $\infty$. Then $(x,y) \in E(\F_{5^n})$ for some $y \in \F_{5^n}$. Therefore, $\phi(x,y) \in E(\F_{5^n})$ and consequently $\theta(x) \in A_n$.

If $x \in B_n$, then $(x,y) \in E(\F_{5^{2n}})$ for some $y \in \F_{5^{2n}} \backslash \F_{5^n}$. Therefore, the point $\phi(x,y) = \left(\theta(x), y \cdot \dfrac{x^2-1}{x^2} \right) \in E(\F_{5^{2n}}) \backslash E(\F_{5^n})$. In fact, being $y \in \F_{5^{2n}} \backslash \F_{5^n}$, the element $y \cdot \dfrac{x^2-1}{x^2} \in \F_{5^n}$ if and only if $\dfrac{x^2-1}{x^2} = 0$, namely $x = 1$ or $-1$. Since $\pm 1 \not \in B_n$, we conclude that $\theta(x) \in B_n$. 

Finally, $\theta(1) =2$ while $\theta(-1) = 3$ in $\F_5$. Since $(3,0)$ and $(2,0)$ are points of $E(\F_{5^n})$, we conclude that $\theta(1)$ and $\theta(-1)$ belong to $A_n$. 
\end{proof}

Take now an element $x \in A_n$ different from $\infty$. By Theorem 2.3 in \cite{wi} there is an isomorphism 
\begin{displaymath}
\psi : E(\F_{5^n}) \to R / (\pi_5^n-1) R.
\end{displaymath}
  
Therefore, we can study the action of the map $\theta$ on $A_n$ relying upon the structure of the (additive) group $R/(\pi_5^n-1) R$.

On the contrary, if $x \in B_n$, then there is no $y \in \F_{5^n}$ such that $y^2 = x^3+x$, namely $x$ is the $x$-coordinate of a point in $E(\F_{5^{2n}})$. Let us define 
\begin{displaymath}
E(\F_{5^{2n}})_{B_n} = \left\{(x,y) \in E(\F_{5^{2n}}) :    \text{$x \in B_{n}$ and $y \in \F_{5^{2n}} \backslash \F_{5^n}$} \right\}.
\end{displaymath}
Moreover, let $\widetilde{\alpha}$ and $\widetilde{\beta}$ be the elements of $\F_{5^{2}}$ such that
\begin{eqnarray*}
(\pm \widetilde{\alpha})^2 & = & 1^3+1=2\\ 
(\pm \widetilde{\beta})^2 & = & (-1)^3-1 = -2.
\end{eqnarray*}

Define the set $E(\F_{5^{2n}})_{B_n}^*$ in such a way:
\begin{displaymath}
\begin{array}{lcll}
E(\F_{5^{2n}})_{B_n}^* & = & \{ O, (0,0), (2,0), (3,0), (1, \pm \widetilde{\alpha}), (-1, \pm \widetilde{\beta} \}, & \text{if $n$ is odd;}\\
E(\F_{5^{2n}})_{B_n}^* & = & \{ O, (0,0), (2,0), (3,0) \}, & \text{if $n$ is even.}
\end{array}
\end{displaymath}
In both cases $O$ denotes the point at infinity.

The following holds.
\begin{lemma}
Let $x \in \F_{5^{2n}}$ and $P=(x,y) \in E(\F_{5^{2n}})$.  Then, 
$\left(\pi_{5}^{n} +1 \right) P = O$ if and only if $P \in E(\F_{5^{2n}})_{B_n} \cup E(\F_{5^{2n}})_{B_n}^*$.
\end{lemma}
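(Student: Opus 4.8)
The plan is to convert the endomorphism equation $\left(\pi_5^n+1\right)P=O$ into an explicit condition on the coordinates of $P$, using that $\pi_5$ acts by $(x,y)\mapsto(x^5,y^5)$. First I would rewrite $\left(\pi_5^n+1\right)P=O$ as $\pi_5^n(P)=-P$. For $P=O$ this holds trivially and $O\in E(\F_{5^{2n}})_{B_n}^{*}$, so I reduce to $P=(x,y)$. Comparing with $-P=(x,-y)$, the equation $\pi_5^n(P)=-P$ becomes the pair of conditions $x^{5^n}=x$ and $y^{5^n}=-y$, i.e. $x\in\F_{5^n}$ together with $y^{5^n}=-y$. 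Everything then comes down to identifying exactly which points $(x,y)\in E(\F_{5^{2n}})$ satisfy these two conditions and checking that this set is $E(\F_{5^{2n}})_{B_n}\cup E(\F_{5^{2n}})_{B_n}^{*}$.

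Next I would split into two cases. If $y=0$, then $y^{5^n}=-y$ is automatic and the condition reduces to $x^3+x=0$ with $x\in\F_{5^n}$, giving $x\in\{0,2,3\}\subseteq\F_5$ and hence the three affine $2$-torsion points; conversely these (and $O$) are $2$-torsion and fixed by $\pi_5$, so $\left(\pi_5^n+1\right)P=2P=O$, and they are precisely the elements of $E(\F_{5^{2n}})_{B_n}^{*}$ that are not of the form $(\pm1,\cdot)$. If $y\neq0$, then (assuming $x\in\F_{5^n}$) $y^2=x^3+x\in\F_{5^n}^{*}$, so $y^{5^n}=\pm y$ and equals $-y$ exactly when $y\notin\F_{5^n}$, i.e. exactly when $x^3+x$ is a non-square in $\F_{5^n}$; thus the coordinate condition is equivalent to $x\in\F_{5^n}$ and $y\in\F_{5^{2n}}\setminus\F_{5^n}$. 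For $x\notin\{1,-1\}$ this is exactly $x\in B_n$, i.e. $P\in E(\F_{5^{2n}})_{B_n}$. The cases $x=\pm1$ I would settle by the short residue computation (reducing $5^n\bmod 8$, or the relevant exponents $\bmod\,4$) showing that $2=1^3+1$ and $-2=(-1)^3+(-1)$ are squares in $\F_{5^n}$ when $n$ is even and non-squares when $n$ is odd: so for $n$ even these $x$-values never arise (then $\pm1\in A_n$), while for $n$ odd they force $y\in\{\pm\widetilde{\alpha}\}$ respectively $\{\pm\widetilde{\beta}\}$, which are exactly the four points adjoined to $E(\F_{5^{2n}})_{B_n}^{*}$; a one-line check that $\widetilde{\alpha}^4=\widetilde{\beta}^4=4$, hence $\widetilde{\alpha}^5=-\widetilde{\alpha}$ and $\widetilde{\beta}^5=-\widetilde{\beta}$, confirms that for odd $n$ these points satisfy $y^{5^n}=-y$.

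The one delicate point is precisely this bookkeeping at $x=\pm1$: one must see that $\pm1$ is deliberately excised from $B_n$ (for $n$ odd) exactly because the corresponding points $(1,\pm\widetilde{\alpha})$, $(-1,\pm\widetilde{\beta})$ still lie in the kernel of $\pi_5^n+1$ and so must be reinstated, with both of their $y$-values, inside $B_n^{*}$, whereas for $n$ even they genuinely drop out. Apart from that, the only inputs needed are the identification $E(\F_{5^m})=\ker(\pi_5^m-1)$ and the explicit form of $\pi_5$; the isomorphism $\psi$ and the ring structure $R=\Z[i]$ are not used here, though they will be for the later structural results. As a consistency check one could alternatively compute $\lvert\ker(\pi_5^n+1)\rvert=\deg(\pi_5^n+1)=5^n+1+\Tr(\pi_5^n)$ (separability of $\pi_5^n+1$ following since its differential is $1$) and verify it equals $\lvert E(\F_{5^{2n}})_{B_n}\rvert+\lvert E(\F_{5^{2n}})_{B_n}^{*}\rvert$, but the direct coordinate argument above is cleaner and self-contained.
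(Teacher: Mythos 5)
Your proposal is correct and follows essentially the same route as the paper: both translate $\left(\pi_5^n+1\right)P=O$ into the coordinate conditions $x^{5^n}=x$ and $y^{5^n}=-y$ and then sort the resulting points into $E(\F_{5^{2n}})_{B_n}$ and $E(\F_{5^{2n}})_{B_n}^{*}$ by casework on $y$ and on $x=\pm1$. You merely spell out more explicitly the quadratic-residue check at $x=\pm1$ and the verification for the $2$-torsion and $(\pm1,\cdot)$ points, which the paper leaves as ``it can be checked.''
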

\begin{proof}
Consider a rational point $P=(x,y) \in E(\F_{5^{2n}})$ such that $\left( \pi_5^n +1 \right) (P) = O$. This is equivalent to saying that $\left( x^{5^n}, y^{5^n} \right) = (x,-y)$. Then, $x \in \F_{5^n}$ and $y^{5^n} = -y$. If $y \in \F_{5^n}$, then $y^{5^n} = y$ and consequently $y=-y$. Therefore $y=0$ and $x^3+x=0$, namely $x=0, 2$ or $-2$. In all cases $P \in E(\F_{5^{2n}})_{B_n}^*$. If, on the contrary, $y \not \in \F_{5^n}$, then $P \in E(\F_{5^{2n}})_{B_n}$ or, only if $n$ is odd, $x= \pm 1$, namely $P \in E(\F_{5^{2n}})_{B_n}^*$. 

Viceversa, suppose that $P \in E(\F_{5^{2n}})_{B_n}$. Then, $\pi_5^n (P) = \left(x, y^{5^n} \right)$. Since $y \not \in \F_{5^n}$, then $y^{5^n} \not = y$. This implies that $\pi_5^n (P) = - P$. Finally, it can be checked that $\left(\pi_{5}^{n} +1 \right) P = O$ for any point $P$ in $E(\F_{5^{2n}})_{B_n}^*$.
\end{proof}

As an immediate consequence of the result just proved we can claim that there is an isomorphism
\begin{displaymath}
\widetilde{\psi} : E(\F_{5^{2n}})_{B_n} \cup E(\F_{5^{2n}})_{B_n}^*  \to R / (\pi_5^n+1) R.
\end{displaymath}

\subsection{Cycles.}

We will study the structure of the cycles and trees of the graph associated with $\theta$ proceeding as in \cite{SU2}. The ring $R = \Z[i]$ is euclidean, hence a unique factorization domain. An element $a+ib \in R$ has norm $N(a+ib) = a^2+b^2$.  All positive primes of $\Z$, different from $2$, either split in $R$ or are inert. We can factor the element $\pi_5^n-1$ (respectively $\pi_5^n+1$) in primes of $R$. 

Let $\rho = -1+i$ and suppose that  $\pi_5^n-1$ (resp. $\pi_5^n+1$) factors as
\begin{equation*}
\rho^{e_0} \cdot \left( \prod_{i=1}^v p_i^{e_i} \right) \cdot \left( \prod_{i = v+1}^w r_i^{e_i} \right),
\end{equation*}
where 
\begin{enumerate}
\item each $e_i \in \N$;
\item  for $1 \leq i \leq v$ the elements $p_i \in \Z$ are distinct primes of $R$ and $N( p_i^{e_i} ) = p_i^{2 e_i}$;
\item for $v+1 \leq i \leq w$ the elements $r_i \in R \backslash \Z$  are distinct primes of $R$, different from $\rho$ and $\overline{\rho}$,  and $N( r_i^{e_i} ) = p_i^{e_i}$, for some rational integer $p_i$ such that $r_i \overline{r}_i = p_i$.
\end{enumerate}

The ring $R / (\pi_5^n - 1) R$ (resp. $R / (\pi_5^n + 1) R$) is isomorphic to 
\begin{equation}\label{orb4}
R / \rho^{e_0} R \times \left( \prod_{i=1}^v R / p_i^{e_i} R \right) \times \left( \prod_{i = v+1}^w R / r_i^{e_i} R \right).
\end{equation}

For any $1 \leq i \leq v$ the additive group of $R / p_i^{e_i} R$ is isomorphic to the direct sum of two cyclic groups of order $p_i^{e_i}$. This implies that, for each integer $0 \leq h_i \leq e_i$, there are $N_{h_i}$ points in $R / p_i^{e_i} R$ of order $p_i^{h_i}$, where
\begin{displaymath}
N_{h_i} = \left\{
\begin{array}{ll}
1 & \text{ if $h_i = 0$}\\
{p_i}^{2 h_i} - {p_i}^{2(h_{i}-1)} & \text{ otherwise.}
\end{array}
\right.
\end{displaymath}

For any $v+1 \leq i \leq w$ the additive group of $R / r_i^{e_i} R$ is cyclic of order $p_i^{e_i}$. Hence, there are $\phi(p_i^{h_i})$ points in $R / r_i^{e_i} R$ of order $p_i^{h_i}$, for each integer $0 \leq h_i \leq e_i$.

An element $x \in A_n \backslash \{0, \infty \}$ (resp. $B_n$), which is periodic under the action of the map $\theta$, is the $x$-coordinate of a rational point of $E (\F_{5^n})$ (resp. $E (\F_{5^{2n}}))$, which corresponds to a point of the form $P = (0, P_1, \dots, P_w) \in R / (\pi_5^n-1) R$ (resp. $R/ (\pi_5^n+1) R$). Each $P_i$, for $1 \leq i \leq w$, has order $p_i^{h_i}$, for some integer $0 \leq h_i \leq e_i$. 

For any $P_i$ we aim at finding the smallest positive integer $l_i$ such that $[\rho]^{l_i} P_i = P_i$ or $-P_i$. 
\begin{itemize}
\item If $1 \leq i \leq v$, then $l_i$ is the smallest among the positive integers $k$ such that $p_i^{h_i}$ divides  $\rho^{k} + 1$ or $\rho^{k} - 1$ in $R$.
\item If $v+1 \leq i \leq w$, then $l_i$ is the smallest among the positive integers $k$ such that ${r_i}^{h_i}$ divides  $\rho^{k} + 1$ or $\rho^{k} - 1$ in $R$.
\end{itemize}

Let
\begin{displaymath}
l' = lcm (l_1, \dots, l_w).
\end{displaymath}
We introduce parameters $\epsilon_i$, for $1 \leq i \leq w$, such defined: 
\begin{displaymath}
\epsilon_i = \left\{
\begin{array}{ll}
1 & \textrm{if $[\rho]^{l_i} P_i = P_i$}\\
0 & \textrm{if $[\rho]^{l_i} P_i = - P_i$}.
\end{array}
\right.
\end{displaymath}
Let 
\begin{displaymath}
\epsilon = \left\{
\begin{array}{ll}
0 & \textrm{if any $\epsilon_i =1$ or any $\epsilon_i = 0$}\\
1 & \textrm{otherwise}.
\end{array}
\right.
\end{displaymath}
Then, the period of $x$ with respect to $\theta$ is $l = 2^{\epsilon} \cdot l'.$

We note that the number of points $P = (0, P_1, \dots, P_w)$ in $R/(\pi_5^n -1) R$ (resp. $R/(\pi_5^n+1) R$), where each $P_i$ has order $p_i^{h_i}$,  is 
\begin{displaymath}
m = \left( \prod_{i=1}^{v} N_{h_i} \right)  \cdot \left( \prod_{i=v+1}^{w} \phi(p_i^{h_i})  \right). 
\end{displaymath} 
Let $P = \psi(x,y)$ (resp. $\widetilde{\psi} (x,y)$) be one of such points. The period $l$ of $x$ can be calculated as above. In particular, we note that also $-P = \psi(x,-y)$ (resp. $\widetilde{\psi} (x,-y)$) has the same additive order in $R / (\pi_5^n-1)R$ (resp. $R/(\pi_5^n+1) R$).  This amounts to saying that the $m$ points give rise to 
$\dfrac{m}{2l}$    
cycles of length $l$.

Now, define the sets $Z_{e_i} = \{0, 1, \dots, e_i \}$, for any $1 \leq i \leq w$. Let
\begin{equation*}
H = \prod_{i=1}^{w} Z_{e_i}.
\end{equation*}

For any $h \in H$ denote by $C_h$ the set of all cycles formed by the elements $x \in A_n$ (resp. $B_n$) such that $(x,y) \in E (\F_{5^n})$ (resp. $E (\F_{5^{2n}})_{B_n}$) for some $y \in \F_{5^n}$ (resp. $\F_{5^{2n}}$) and $\psi(x,y) = P = (0, P_1, \dots, P_w) \in R / (\pi_5^n-1) R$ (resp. $\widetilde{\psi} (x,y) = P \in R / (\pi_5^n+1) R$), where
\begin{itemize}
\item each $P_i$, for $1 \leq i \leq v$, has additive order $p_i^{h_i}$ in $R / p_i^{e_i} R$;
\item each $P_i$, for $v+1 \leq i \leq w$, has additive order $p_i^{h_i}$ in $R / r_i^{e_i} R$.
\end{itemize}  
Let $l_h$ be the length of the cycles formed by these points. 
Finally, denote by $C_{A_n}$ the set of all cycles formed by the elements of $A_n$ and by $C_{B_n}$ the set of all cycles formed by the elements of $B_n$.

The following holds.
\begin{lemma}
With the above notation, $C_{A_n}$ (resp. $C_{B_n}$) is equal to $\displaystyle\bigcup_{h \in H} C_h$, being 
\begin{displaymath}
\lvert C_{h} \rvert = \frac{1}{2 l_h} \left( \prod_{i=1}^{v} N_{h_i} \right)  \cdot \left( \prod_{i=v+1}^{w} \phi(p_i^{h_i}) \right)
\end{displaymath}
for any non-zero $h \in H$.
\end{lemma}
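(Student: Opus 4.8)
The plan is to reduce the statement to the structure of the abelian groups $R/(\pi_5^n-1)R$ and $R/(\pi_5^n+1)R$ together with the period formula recalled just before the statement. First I would check that the index tuple $h$ is constant along each cycle, so that the sets $C_h$ give a disjoint decomposition of $C_{A_n}$ (resp.\ $C_{B_n}$); then, for a fixed non-zero $h$, I would count the relevant points of the group and divide by the common length $l_h$ of the associated cycles, using that the passage from a point to the $x$-coordinate of its cycle is $(2l_h)$-to-$1$.

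\textbf{The decomposition.} Let $x$ be periodic in $A_n\setminus\{0,\infty\}$ (resp.\ in $B_n$) and let $P=(0,P_1,\dots,P_w)$ be one of the two points of $R/(\pi_5^n-1)R$ (resp.\ $R/(\pi_5^n+1)R$) attached to it, with $P_i$ of additive order $p_i^{h_i}$. For each $i\ge 1$ the prime $\rho$ is coprime to $p_i$ (as $N(\rho)=2\neq p_i$) and to $r_i$ (since $r_i\neq\rho,\overline\rho$), so multiplication by $\rho$ is an automorphism of $R/p_i^{e_i}R$ and of $R/r_i^{e_i}R$ and preserves the order of $P_i$. Since on a cycle the map $\theta$ corresponds to $[\rho]$, and since replacing $(x,y)$ by $(x,-y)$ replaces $P$ by $-P$ (again order-preserving componentwise), the tuple $h\in H$ depends only on the cycle of $x$. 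Hence each cycle of $C_{A_n}$ (resp.\ $C_{B_n}$) lies in exactly one $C_h$, which yields $C_{A_n}=\bigcup_{h\in H}C_h$ (resp.\ $C_{B_n}=\bigcup_{h\in H}C_h$). I would also record that the summand $h=\mathbf 0$ contributes only the cycle $\{\infty\}$ in the $A_n$ case and nothing in the $B_n$ case; here, and throughout the count below, one uses that the points of $E(\F_{5^{2n}})_{B_n}^{*}$ other than $O$ have non-zero $\rho$-component (so none of them is periodic, and the ones with $x=\pm1$ never enter $B_n$, consistently with $1\mapsto 2\mapsto 0\mapsto\infty$ and $-1\mapsto 3\mapsto 0\mapsto\infty$). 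This is why the displayed formula is asserted only for non-zero $h$: for $h=\mathbf 0$ one has $m=1$, $l_h=1$, and $m/(2l_h)=\frac12$, while $C_{\mathbf 0}$ consists of at most one cycle.

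\textbf{Counting a fixed type.} Fix a non-zero $h\in H$. By the direct product \eqref{orb4} and the counts recalled before the statement --- $N_{h_i}$ elements of order $p_i^{h_i}$ in $R/p_i^{e_i}R$, and $\phi(p_i^{h_i})$ of order $p_i^{h_i}$ in $R/r_i^{e_i}R$ --- the number of points $P=(0,P_1,\dots,P_w)$ with $P_i$ of order $p_i^{h_i}$ for all $i$ equals
\begin{displaymath}
m=\Bigl(\prod_{i=1}^v N_{h_i}\Bigr)\cdot\Bigl(\prod_{i=v+1}^w\phi(p_i^{h_i})\Bigr).
\end{displaymath}
Through $\psi$ (resp.\ $\widetilde\psi$) these $m$ points correspond bijectively to the points $(x,y)$ of $E(\F_{5^n})$ (resp.\ $E(\F_{5^{2n}})_{B_n}$) whose $x$-coordinate is periodic and lies in a cycle of $C_h$. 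Since the annihilator of $P_i$ is $p_i^{h_i}R$ (resp.\ $r_i^{h_i}R$), each $l_i$ and each $\epsilon_i$, hence also $l'$, $\epsilon$ and the period, depend only on $h$; thus all these $x$ lie in cycles of one and the same length $l_h$.

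\textbf{From points to cycles.} It remains to see that exactly $2l_h$ of the $m$ points lie above a given cycle of $C_h$. Fix such a cycle and a point $P$ above it. From $\theta^{l_h}(x)=x$ one gets $[\rho]^{l_h}P=\pm P$, and $P\neq -P$ because $2P=0$ would force each $P_i$ (of odd order $p_i^{h_i}$) to vanish, i.e.\ $h=\mathbf 0$. If $[\rho]^{l_h}P=P$, then the $[\rho]$-orbits of $P$ and of $-P$ both have $l_h$ elements and are disjoint (if $[\rho]^jP=-P$ with $0\le j<l_h$, then $\theta^j(x)=x$, forcing $j=0$ and $P=-P$); if $[\rho]^{l_h}P=-P$, then the single $[\rho]$-orbit of $P$ has $2l_h$ elements and contains $-P$. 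Since the points above the cycle are exactly the $[\rho]^jP$ and $[\rho]^j(-P)$ for $0\le j<l_h$, in both cases they number $2l_h$, so the $m$ points of type $h$ split into $m/(2l_h)$ cycles, which is the asserted value of $\lvert C_h\rvert$. The main obstacle is the combination of two bookkeeping facts: that $l_i$, $\epsilon_i$ and $\epsilon$ really depend only on the orders $h_i$ (so that $l_h$ is well defined on $C_h$), and that $P\mapsto x$ is uniformly $(2l_h)$-to-$1$, which is where the interaction between negation and the $[\rho]$-orbit must be handled carefully; everything else is the module theory of $R/(\pi_5^n-1)R$ and $R/(\pi_5^n+1)R$ already developed above.
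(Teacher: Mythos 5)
Your proposal is correct and follows exactly the route the paper intends: the paper in fact states this lemma without a separate proof, treating it as a summary of the immediately preceding discussion (the count of $m$ points of a fixed order type and the remark that $P=\psi(x,y)$ and $-P=\psi(x,-y)$ share an $x$-coordinate, so the $m$ points yield $m/(2l)$ cycles). Your write-up supplies the details that discussion leaves implicit — that $h$ is constant along cycles, that $P\neq -P$ for non-zero $h$ because the $p_i$ are odd, and the careful $2l_h$-to-$1$ bookkeeping in both the $[\rho]^{l_h}P=P$ and $[\rho]^{l_h}P=-P$ cases — without deviating from the paper's approach.
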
 

In the following we will denote by $V_{A_n}$ (respectively $V_{B_n}$) the set of the elements of $\F_{5^n}$ belonging to some cycle of $C_{A_n}$ (respectively $C_{B_n}$).

We want to analyse the depth of the trees attached to the nodes of the cycles. To this purpose we recall that, if $x+iy$ is a complex number and $n$ is a positive integer, then the $n$-th power of $x+iy$ can be expressed as
\begin{equation*}
\sum_{k=0}^{\lfloor \frac{n}{2} \rfloor} (-1)^k {n \choose 2k} x^{n-2k} y^{2k} 
+   i \cdot \sum_{k=0}^{\lfloor \frac{n-1}{2} \rfloor} (-1)^k {n \choose 2k+1} x^{n-(2k+1)} y^{2k+1}.
\end{equation*}

The following Lemma, which is a special case of a Theorem by Legendre (see \cite{Le}), and the subsequent Corollary will be useful in determining the highest power of 2 which divides a factorial.
\begin{lemma}
Let $n=a_k \cdot 2^k + a_{k-1} \cdot 2^{k-1} + \dots + a_1 \cdot 2 + a_0$ be the base 2 expansion of a positive integer $n$. Then, the highest power of $2$ dividing $n!$ is $2^{\mu}$, where
\begin{equation*}
\mu = n - (a_k+a_{k-1}+ \dots + a_1 + a_0).
\end{equation*} 
\end{lemma}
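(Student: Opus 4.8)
The plan is to derive the statement from Legendre's (de Polignac's) formula for the $2$-adic valuation of a factorial, namely that the highest power of $2$ dividing $n!$ is $2^\mu$ with
\begin{equation*}
\mu = \sum_{j=1}^{\infty} \left\lfloor \frac{n}{2^j} \right\rfloor,
\end{equation*}
and then to evaluate the right-hand side in terms of the base $2$ digits $a_0, \dots, a_k$ of $n$.

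First I would recall (or simply quote from \cite{Le}) why $\mu = \sum_{j \geq 1} \lfloor n/2^j \rfloor$. Writing $v_2(m)$ for the exponent of $2$ in $m$, one has $\mu = \sum_{m=1}^{n} v_2(m)$; since $v_2(m)$ equals the number of positive integers $j$ with $2^j \mid m$, interchanging the order of summation gives $\mu = \sum_{j \geq 1} \#\{ m \leq n : 2^j \mid m \} = \sum_{j \geq 1} \lfloor n/2^j \rfloor$, a sum which is finite because $\lfloor n/2^j \rfloor = 0$ as soon as $j > k$.

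Next I would substitute the base $2$ expansion: for $1 \leq j \leq k$ one has $\lfloor n/2^j \rfloor = \sum_{i=j}^{k} a_i 2^{i-j}$, so that
\begin{align*}
\mu &= \sum_{j=1}^{k} \sum_{i=j}^{k} a_i 2^{i-j} = \sum_{i=1}^{k} a_i \sum_{j=1}^{i} 2^{i-j} = \sum_{i=1}^{k} a_i \left( 2^{i} - 1 \right) \\
&= \sum_{i=0}^{k} a_i 2^{i} - \sum_{i=0}^{k} a_i = n - (a_k + a_{k-1} + \dots + a_1 + a_0),
\end{align*}
where in the passage to the second line I used that the $i=0$ term $a_0(2^0-1)$ vanishes, so it may be freely adjoined to both sums, and that $\sum_{i=0}^k a_i 2^i = n$ by definition of the expansion. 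This is exactly the claimed value of $\mu$. (Alternatively, one could run an induction on $n$, using $\mu(n) = \mu(n-1) + v_2(n)$ together with the fact that passing from $n-1$ to $n$ turns a block of $v_2(n)$ trailing ones into zeros and flips the next bit, so the digit sum changes by $1 - v_2(n)$.)

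There is essentially no obstacle here: once Legendre's formula is invoked, the argument is a single interchange of summations followed by a geometric series. The only point requiring a little care is the bookkeeping with the finite top index $k$, i.e.\ making sure the inner and outer summation ranges of the double sum are treated consistently; and, if one wishes the lemma to be self-contained rather than merely citing \cite{Le}, including the two-line derivation of $\mu = \sum_{j \geq 1} \lfloor n/2^j \rfloor$ sketched above.
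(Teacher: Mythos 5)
Your proof is correct. Note that the paper itself gives no proof of this lemma at all: it simply labels the statement as a special case of a theorem of Legendre and cites \cite{Le}. Your argument --- deriving $\mu = \sum_{j\ge 1}\lfloor n/2^j\rfloor$ by counting multiples of each power of $2$ among $1,\dots,n$, then substituting the binary expansion and summing the geometric series to get $\sum_{i} a_i(2^i-1) = n - \sum_i a_i$ --- is the standard derivation, and every step checks out, including the handling of the vanishing $i=0$ term. It supplies a self-contained justification where the paper relies on a citation, which is strictly more than the paper does.
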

\begin{corollary}\label{cor_hpn}
If $n$ is a positive integer and $2^{\mu}$ is the highest power of $2$ dividing $n!$, then $\mu \leq n-1$.
\end{corollary}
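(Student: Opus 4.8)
The plan is to read the conclusion straight off the preceding Lemma (Legendre's formula). Fix the base $2$ expansion $n = a_k \cdot 2^k + a_{k-1} \cdot 2^{k-1} + \dots + a_1 \cdot 2 + a_0$, so that each digit $a_j$ lies in $\{0,1\}$ and, by that Lemma, the exponent $\mu$ of the highest power of $2$ dividing $n!$ satisfies
\begin{displaymath}
\mu = n - (a_k + a_{k-1} + \dots + a_1 + a_0).
\end{displaymath}

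The single point to verify is that the digit sum $a_k + a_{k-1} + \dots + a_0$ is at least $1$. This is immediate: if every $a_j$ were $0$, then $n$ would equal $0$, contradicting the hypothesis $n \geq 1$. Hence $a_k + \dots + a_0 \geq 1$, and substituting this into the displayed identity yields $\mu \leq n - 1$, as claimed.

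There is essentially no obstacle here; the statement is a one-line consequence of Legendre's formula, the only care being the (trivial) observation that a positive integer has a nonzero binary digit. If desired one can also note that equality $\mu = n-1$ occurs exactly when the digit sum equals $1$, i.e. when $n$ is a power of $2$, but this refinement is not needed for the later arguments.
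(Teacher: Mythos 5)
Your proof is correct and is exactly the intended argument: the paper states this corollary without proof, treating it as an immediate consequence of the preceding Lemma, and your observation that a positive integer has binary digit sum at least $1$ is precisely the one-line justification needed. Nothing is missing.
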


Before proceeding with the study of  the trees, we notice that any element in $\Z[i] / \rho^k \Z[i]$ can be uniquely represented in the form
\begin{displaymath}
[a_0+a_1 \rho + \dots + a_k \rho^{k-1}],
\end{displaymath}
where each $a_i \in \{0, 1 \}$ (see \cite{gil}).
 
\subsection{Trees rooted in elements  of $V_{A_n}$}
We begin this subsection proving the following technical result.
\begin{lemma}\label{normpi}
Let $n=2^l m$, for some non-negative integer $l$ and odd integer $m$, be a positive integer. Then, the highest power of $2$ which divides $N(\pi_5^n-1)$, where $\pi_5 = 1+2i$, is
\begin{itemize}
\item $2^2$, if $n$ is odd;
\item $2^{3+2l}$, if $n$ is even.
\end{itemize} 
\end{lemma}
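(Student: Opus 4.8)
The plan is to translate the statement into the computation of a single valuation in $R=\Z[i]$ at the prime $\rho=-1+i$ lying above $2$. Since $N(z)=z\overline z$, since $\overline\rho=-1-i$ is an associate of $\rho$ (so that $v_\rho(\overline z)=v_\rho(z)$ for every $z\in R$), and since $N(\rho)=2$ forces $v_\rho(2)=2$ and hence $v_\rho(a)=2\,v_2(a)$ for every $a\in\Z$, one gets
\[
2\,v_2\!\bigl(N(\pi_5^n-1)\bigr)=v_\rho\!\bigl(N(\pi_5^n-1)\bigr)=v_\rho\!\bigl(\pi_5^n-1\bigr)+v_\rho\!\bigl(\overline{\pi_5}^{\,n}-1\bigr)=2\,v_\rho\!\bigl(\pi_5^n-1\bigr),
\]
so that $v_2\bigl(N(\pi_5^n-1)\bigr)=v_\rho(\pi_5^n-1)$ and everything reduces to computing this $\rho$-adic valuation.

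Next I would record the elementary identities coming from $\rho^2=-2i$. First, $\pi_5=1+2i=1-\rho^2$, whence $\pi_5-1=-\rho^2$ and $v_\rho(\pi_5-1)=2$. Second, $\pi_5+1=2+2i=2(1+i)$, and since $1+i$ is an associate of $\rho$ while $v_\rho(2)=2$, we get $v_\rho(\pi_5+1)=3$. Combining the two, $v_\rho(\pi_5^2-1)=v_\rho(\pi_5-1)+v_\rho(\pi_5+1)=5$, i.e. $\pi_5^2\equiv 1\pmod{\rho^5}$ (and in particular $\pmod{\rho^3}$).

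The core of the argument is a telescoping factorization. Writing $n=2^l m$ with $m$ odd,
\[
\pi_5^n-1=(\pi_5^m-1)\prod_{j=0}^{l-1}\bigl(\pi_5^{2^j m}+1\bigr).
\]
For the first factor, $\pi_5^m-1=(\pi_5-1)\sum_{k=0}^{m-1}\pi_5^k$ with each $\pi_5^k\equiv 1\pmod\rho$, so $\sum_{k=0}^{m-1}\pi_5^k\equiv m\equiv 1\pmod\rho$ has $\rho$-adic valuation $0$; hence $v_\rho(\pi_5^m-1)=2$. If $l=0$ this already gives $v_\rho(\pi_5^n-1)=2$, which is the odd case. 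If $l\ge 1$, the factor with $j=0$ is treated the same way: $\pi_5^m+1=(\pi_5+1)\sum_{k=0}^{m-1}(-1)^k\pi_5^k$, and modulo $\rho$ the alternating sum is again $\equiv m\equiv 1$, so $v_\rho(\pi_5^m+1)=3$. Finally, for $1\le j\le l-1$, from $\pi_5^2\equiv 1\pmod{\rho^3}$ one gets $\pi_5^{2^j m}\equiv 1\pmod{\rho^3}$, hence $\pi_5^{2^j m}+1\equiv 2\pmod{\rho^3}$, and since $v_\rho(2)=2<3$ this forces $v_\rho(\pi_5^{2^j m}+1)=2$. Adding the contributions of the $l-1$ remaining factors yields, for $n$ even,
\[
v_\rho(\pi_5^n-1)=2+3+2(l-1)=3+2l,
\]
and the reduction of the first paragraph converts this into the claimed highest powers of $2$: namely $2^2$ when $n$ is odd and $2^{3+2l}$ when $n$ is even.

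The main obstacle is the even case. A naive term-by-term estimate of the binomial expansion $(1+\rho^2)^n-1=\sum_{k\ge 1}\binom{n}{k}\rho^{2k}$ does not work, because the terms with $k=1$ and $k=2$ both have $\rho$-adic valuation $2l$ and could cancel; this is exactly why one must use the structured telescoping product above, which is the analogue — corrected for the ramification $v_\rho(2)=2$ — of the classical computation of $v_2(x^n-1)$ for odd $x$. The one delicate point inside that argument is the uniform value $v_\rho(\pi_5^{2^j m}+1)=2$ for all $j\ge 1$, which hinges on the congruence $\pi_5^2\equiv 1\pmod{\rho^3}$ established in the second paragraph.
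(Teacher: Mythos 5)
Your proof is correct, and it takes a genuinely different route from the paper. The paper works entirely over $\Z$: it expands $(1+2i)^n$ binomially, separates the real and imaginary parts of $\pi_5^n-1$, bounds the $2$-adic valuations of the terms $\binom{n}{2k}2^{2k}$ and $\binom{n}{2k+1}2^{2k+1}$ via Legendre's formula for $v_2(n!)$ (Corollary \ref{cor_hpn}), and then pins down the exact power by explicit congruences modulo $4,8$ (odd case) and modulo $2^{3+2l}, 2^{4+2l}$ (even case). You instead reduce the whole statement to the single $\rho$-adic valuation $v_\rho(\pi_5^n-1)$ via the identity $v_2(N(z))=v_\rho(z)$ (which is justified: $\overline\rho=i\rho$ is an associate of $\rho$ and $v_\rho(2)=2$), and then run a lifting-the-exponent style telescoping factorization $\pi_5^n-1=(\pi_5^m-1)\prod_{j=0}^{l-1}(\pi_5^{2^jm}+1)$, using $\pi_5-1=-\rho^2$, $v_\rho(\pi_5+1)=3$, and $\pi_5^{2^jm}+1\equiv 2\pmod{\rho^3}$ for $j\ge 1$. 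I checked each valuation claim and the bookkeeping $2+3+2(l-1)=3+2l$; all are right, and your remark that the naive term-by-term estimate fails because the $k=1$ and $k=2$ terms of $\sum_k\binom{n}{k}\rho^{2k}$ share the valuation $2l+2$ correctly identifies why some structured argument is needed. Your approach buys a cleaner, more conceptual computation that dispenses with the factorial estimates, and as a bonus it yields $v_\rho(\pi_5^n+1)$ directly (giving $e_0=3$ for $n$ odd and $e_0=2$ for $n$ even), which the paper has to extract separately in Lemma \ref{orb_3} from the factorization $N(\pi_5^{2n}-1)=N(\pi_5^n-1)N(\pi_5^n+1)$; the paper's approach, by contrast, stays within elementary integer arithmetic and the already-stated Legendre lemma.
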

\begin{proof}
The norm of $\pi_5^n-1$ is
\begin{equation*}
N(\pi_5^n-1) = \left[\sum_{k=1}^{\lfloor \frac{n}{2} \rfloor} (-1)^k {n \choose 2k}  2^{2k} \right]^2
+ \left[\sum_{k=0}^{\lfloor \frac{n-1}{2} \rfloor} (-1)^k {n \choose 2k+1} 2^{2k+1} \right]^2.
\end{equation*}
Suppose firstly that $n$ is odd. We notice that, by Corollary \ref{cor_hpn}, the highest power of $2$ dividing $(2k)!$ and $(2k+1)!$ does not exceed $2^{2k-1}$ and $2^{2k}$ respectively.  Since $n-1$ is even, the power of $2$ dividing ${n \choose 2k } \cdot 2^{2k}$, for $k$ greater than $0$, is not smaller than $2^{2}$ and the power of $2$ dividing  ${n \choose 2k+1} \cdot 2^{2k+1}$, for $k$ greater than $0$, is not smaller than $2^2$. Therefore, 
\begin{equation*}
N(\pi_5^n-1) = [2^2 \cdot r]^2 + [2m+2^2s]^2
\end{equation*}
for some integers $r$ and $s$.
Passing to the residue classes modulo 4 and 8 we get
\begin{eqnarray*}
N(\pi_5^n-1) & \equiv & 0 +  [2m]^2 \equiv 0 \pmod{4}\\
N(\pi_5^n-1) & \equiv & 0 +  [2m+4s]^2 \equiv 4 m^2 \pmod{8}.
\end{eqnarray*}
Therefore, the highest power of $2$ dividing $N(\pi_5^n-1)$ is $2^2$.

Suppose now that $n$ is even and that $n=2^l m$, for some positive integer $l$ and odd integer $m$. If $n=2$, then $N(\pi_5^2-1)=32=2^5$ and we are done.

Suppose that $n$ is greater than 2, namely $n \geq 4$. We notice that ${n \choose 2} \cdot 2^2 = n \cdot (n-1) \cdot 2$, hence $2^{l+1}$ is the highest power of 2 which divides ${n \choose 2} \cdot 2^2$. Moreover,  each term ${n \choose 2k} \cdot 2^{2k}$, for $k \geq 2$, is divisible by $2^{l+2}$ and each term ${n \choose 2k+1} \cdot 2^{2k+1}$, for $k \geq 1$, is divisible by $2^{l+2}$. Therefore,
\begin{equation*}
N(\pi_5^n-1) = \left[-2^{l+1} m (n-1) + 2^{l+2} r \right]^2
+ \left[2^{l+1} m + 2^{l+2} s \right]^2,
\end{equation*}  
for some  integers $r$ and $s$.

Passing to the residue classes modulo $2^{3+2l}$ and $2^{4+2l}$ we get respectively
\begin{eqnarray*}
N(\pi_5^n-1) & \equiv & 2^{2l+2} m^2 + 2^{2l+2} m^2 = m^2 \cdot 2^{2l+3} \equiv 0 \pmod{2^{3+2l}}\\
N(\pi_5^n-1) & \equiv & 2^{2l+2} m^2 + 2^{2l+2} m^2 = m^2 \cdot 2^{2l+3}  \not\equiv 0 \pmod{2^{4+2l}},
\end{eqnarray*} 
since $m^2$ is an odd integer.
\end{proof}

The following Lemma characterizes the reversed trees having root in $V_{A_n}$.
\begin{lemma}\label{orb_2}
Any element $x \in V_{A_n}$ is the root of a reversed binary tree of depth $e_0$ with the following properties.
\begin{itemize}
\item If $x \not = \infty$,  then there are $2^{k-1}$ vertices at any level $k>0$ of the tree.  Moreover, the root has one child, while all other vertices have two children.
\item If $x = \infty$, then there is $1$ vertex at the level $1$, there are $2$ vertices at the level $2$ of the tree and $2^{k-2}$ vertices at any level $k > 2$. Moreover, the root and the vertices at the level $2$ have one child, while all other vertices have two children. 
\item If $n$ is odd, then $e_0=2$, if $x \not = \infty$, and $e_0=3$, if $x = \infty$.
\item If $n$ is even and $2^l$ is the greatest power of $2$ which divides $n$, then $e_0 = 3+2l$.
\end{itemize}
\end{lemma}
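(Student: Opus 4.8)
The plan is to translate the tree structure at a periodic point $x$ into an assertion about preimages under multiplication by $\rho = -1+i$ in $R = \Z[i]$, and then count preimages level by level using the explicit representation of $R/\rho^k R$ recalled just before this subsection. Recall that $\theta$ on $A_n$ is modeled by the endomorphism $\phi$, which corresponds to $[\rho]$ on $E(\F_{5^n}) \cong R/(\pi_5^n-1)R$; and $x$ is periodic iff the corresponding point $P = (0, P_1, \dots, P_w)$ has trivial $\rho$-component. So the reversed tree rooted at $x$ is, up to the two-to-one identification $(x,y) \leftrightarrow (x,-y)$, the set of points $Q \in R/(\pi_5^n-1)R$ with $[\rho]^j Q$ eventually landing on the $\rho$-free part, i.e. the set of $Q$ whose image in the first factor $R/\rho^{e_0}R$ is arbitrary while the other components are fixed to $\pm P_i$. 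Since $[\rho]$ is nilpotent of exact index $e_0$ on $R/\rho^{e_0}R$ (because $\rho^{e_0}$ exactly divides $\pi_5^n-1$), the depth of the tree is exactly $e_0$, and $e_0$ is computed from Lemma \ref{normpi}: the highest power of $2$ dividing $N(\pi_5^n-1) = N(\rho)^{e_0} \cdot (\text{odd}) = 2^{e_0}\cdot(\text{odd})$ gives $e_0 = 2$ (resp.\ $3$) when $n$ is odd, and $e_0 = 3+2l$ when $n$ is even. The delicate point for $x=\infty$ is that $\infty$ is the image of $0$ under $\theta$, and $0$ has no $\theta$-preimage in $\F_{5^n}$ other than via the point $(0,0)$; so the combinatorics near the root at $\infty$ is slightly different, which is why the statement singles it out.

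First I would record the preimage count under $[\rho]$ on $R/\rho^{e_0}R$. Using the normal form $[a_0 + a_1\rho + \dots + a_{e_0-1}\rho^{e_0-1}]$ with $a_i \in \{0,1\}$, one checks that multiplication by $\rho$ sends the class of $a_0 + a_1\rho + \cdots$ to $a_0\rho + a_1\rho^2 + \cdots$ (reducing $\rho^{e_0}\equiv 0$), so $[\rho]Q = [\rho]Q'$ iff $Q$ and $Q'$ have the same coefficients $a_0,\dots,a_{e_0-2}$, i.e.\ each element of the image has exactly two $[\rho]$-preimages, and an element lies in the image iff its $a_0$-coefficient is $0$ — equivalently iff it is itself divisible by $\rho$. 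Hence on the subgroup $\rho^k R/\rho^{e_0}R$ (the elements killed by $[\rho]^{e_0-k}$, of size $2^{e_0-k}$) the map $[\rho]$ is two-to-one onto $\rho^{k+1}R/\rho^{e_0}R$. Translating through $\widetilde\psi$ or $\psi$ and the identification $Q\leftrightarrow -Q$, this yields: the reversed tree at a generic $x\neq\infty$ has $2^{k-1}$ vertices at level $k$, the root has a single child (the preimages of $0$ in $R/\rho^{e_0}R$ that are not already in a cycle form a set of size $2$, identified in pairs $\pm$, giving one child), and every non-root vertex has two children until depth $e_0$ is reached, at which point the $a_0$-obstruction forbids further preimages.

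Second, for $x=\infty$ I would argue separately. Here $\theta^{-1}(\infty) = \{0,\infty\}$, so $\infty$ has the single non-cyclic child $0$; then $\theta^{-1}(0)$ consists of the elements $x$ with $x + x^{-1}=0$, i.e.\ $x^2=-1$, which has two solutions in $\F_{5^n}$ (since $-1$ is a square in $\F_5$), so level $2$ has $2$ vertices; and these two vertices correspond in $R/(\pi_5^n-1)R$ to the $2$-torsion-like classes whose further $[\rho]$-preimages are again governed by the $a_0$-obstruction, so each has a single child, after which the tree behaves as $2^{k-2}$ vertices per level, $k>2$, terminating at depth $e_0$. The bookkeeping that $e_0$ in the $\infty$-case is the same $3+2l$ (even $n$) but $3$ rather than $2$ when $n$ is odd comes from the fact that $\infty$ corresponds to the identity $O$ of the group, whose $\rho$-adic "slack" absorbs one extra level compared with a generic point — concretely, one extra factor of $\rho$ is available because the relevant points $(2,0),(3,0),(0,0)$ supplying preimages of $0$ sit in the $2$-torsion, which is exactly the $\rho^{e_0-1}$-layer.

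The main obstacle I anticipate is not the level-by-level count — that is mechanical once the normal-form description of $[\rho]$ on $R/\rho^{e_0}R$ is in hand — but rather the careful handling of the exceptional vertices near the root, especially reconciling the "$\theta$-picture" (where $\infty$, $0$, and the square roots of $-1$ play special roles) with the "$R/(\pi_5^n-1)R$-picture" (where everything is additive), and making sure the two-to-one identification $(x,y)\leftrightarrow(x,-y)$ is applied consistently so that the root gets one child rather than two. The value of $e_0$ itself is immediate from Lemma \ref{normpi} together with $N(\rho)=2$ and unique factorization in $R$, so that part requires no further work.
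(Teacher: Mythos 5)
Your overall strategy is the same as the paper's: model $\theta$ on $A_n$ by multiplication by $[\rho]$ on $R/(\pi_5^n-1)R$, use the normal form $[a_0+a_1\rho+\dots+a_{e_0-1}\rho^{e_0-1}]$ to count $[\rho]$-preimages level by level, quotient by the identification $Q\leftrightarrow -Q$ of the two points over each $x$-coordinate, and read off $e_0$ from Lemma~\ref{normpi} via $N(\rho)=2$. That part is sound. There are, however, two genuine gaps.

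The serious one is your explanation of why the tree rooted at $\infty$ has depth $3$ rather than $e_0=2$ when $n$ is odd. You attribute the extra level to ``$\rho$-adic slack'' at the identity coming from the $2$-torsion points $(0,0),(2,0),(3,0)$, but those points are already fully accounted for at levels $1$ and $2$: in $R/\rho^2 R$ there simply is no third layer. The actual source of level $3$ is the exceptional set $C_n=\{1,-1\}$, which for odd $n$ lies \emph{outside} $A_n\cup B_n$ (the points $(1,\pm\widetilde{\alpha})$, $(-1,\pm\widetilde{\beta})$ have $y\in\F_{5^2}\setminus\F_5$, and $\F_{5^2}\not\subseteq\F_{5^n}$), hence is invisible to the group $R/(\pi_5^n-1)R$ on which your whole count runs. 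One must check separately that $\theta(1)=2$ and $\theta(-1)=3$ feed these two elements into level $3$ of the $\infty$-tree, and that the tree stops there because $x^2\pm x+1=0$ has no root in $\F_{5^n}$ for odd $n$ (it does for even $n$, which is why no anomaly appears then). Without this step your argument proves depth $2$ at $\infty$ for odd $n$, contradicting the statement.

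The second gap is the child counts in the $\infty$-tree. Whether a vertex at level $k$ has one or two children is decided by whether its two $[\rho]$-preimages $\widetilde{Q}^{(1)},\widetilde{Q}^{(2)}$ (differing by $[\rho]^{e_0-1}$ in the first component) satisfy $\widetilde{Q}^{(1)}=-\widetilde{Q}^{(2)}$. For $x\neq\infty$ some $P_i\neq 0$ settles this uniformly, but when all $P_i=0$ the answer depends only on the $\rho$-component and changes with $k$: one finds $\widetilde{Q}_0^{(1)}+\widetilde{Q}_0^{(2)}=0$ exactly when $k=2$ (using $2=[\overline{\rho}][\rho]$ and $[\overline{\rho}]=[\rho+\rho^2]$), while at $k=1$ each preimage is self-negative and at $k>2$ the sum is nonzero. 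Your proposal asserts the right pattern (two children at level $1$, one at level $2$, two thereafter) but offers no computation; ``governed by the $a_0$-obstruction'' does not distinguish $k=2$ from $k>2$, and this is precisely where the explicit normal-form arithmetic is needed.
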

\begin{proof}
For a fixed element $x \in V_{A_n}$ let $(0, P_1, \dots, P_w) \in R / (\pi_5^{n}-1) R$ be one of the (at most two) points with such an $x$-coordinate (here we use the isomorphic representation of $R / (\pi_5^{n}-1) R$ given by (\ref{orb4})). An element $\tilde{x} \in \F_{5^n}$ belongs to the non-zero level $k$ of the reversed binary tree rooted in $x$ if and only if $\theta^k(\tilde{x}) = x$, $\theta^i (\tilde{x}) \not = x$ and none of the $\theta^i (\tilde{x})$ is periodic for any $i < k$. Since $\theta$ maps elements of $A_n$ to $A_n$ and elements of $B_n$ to $B_n$, then $\tilde{x} \in A_n$ (at the end of the proof we will deal with the elements belonging to $C_n$). Therefore there exists $\tilde{y} \in \F_{5^n}$ such that $(\tilde{x}, \tilde{y}) \in E (\F_{5^n})$ and $\psi(\tilde{x}, \tilde{y}) = Q = (Q_0, Q_1, \dots, Q_w)$, where $Q_0 \not =0$. Since $[\rho]^{e_0} Q_0 = 0$ in $R / \rho^{e_0} R$ we have that $k \leq e_0$.

For a fixed positive integer $k \leq e_0$ we aim to find all the points $(Q_0, Q_1,  \dots, Q_w)$ in $R /(\pi_5^{n}-1) R $ such that 
\begin{enumerate}
\item $[\rho]^k Q_0  =  0$ and $[\rho]^{k-1} Q_0 \not = 0$ ;
\item $[\rho]^k Q_i  =   P_i$ for any $1 \leq i \leq w$, or $[\rho]^k Q_i  =   -P_i $ for any $1 \leq i \leq w$. 
\end{enumerate}
The first condition is satisfied if and only if
\begin{equation}\label{q0}
Q_0  = [\rho]^{e_0-k} + \sum_{i=e_0-k+1}^{e_0-1} j_i [\rho]^i, 
\end{equation}
where each $j_i \in \left\{ 0, 1 \right\}$.
The second condition is satisfied if and only if 
\begin{equation*}
Q_i  =  [\rho]^{-k} P_i \text{, for any $i$,}  
\end{equation*}
or
\begin{equation*}
Q_i  =  -[\rho]^{-k} P_i \text{, for any $i$.} 
\end{equation*}

Hence, fixed the values of $j_i$ for $e_0-k+1 \leq i \leq e_0-1$, there are at most two possibilities for $Q$, namely
\begin{eqnarray*}
Q^{(1)} & = & (Q_0, [\rho]^{-k} P_1, \dots, [\rho]^{-k} P_w) \quad \textrm{or}  \\
Q^{(2)} & = & (Q_0, -[\rho]^{-k} P_1, \dots, -[\rho]^{-k} P_w) .
\end{eqnarray*}
Therefore, for a fixed positive integer $k$ there are $2^k$ points $Q$, whose $x$-coordinate belongs to the level $k$ of the tree, provided that not all $P_i$ are zero. If $Q = \psi(\tilde{x}, \tilde{y})$ is one of such points, then $-Q=\psi(\tilde{x}, -\tilde{y})$ has the same $x$-coordinate. Hence, for any $x \in C_{A_n}$ different from $\infty$ there are $2^{k-1}$ vertices at the level $k$ of the reversed binary tree rooted in $x$.

If all $P_i$ are zero, then $x = \infty$. In particular, the points $Q^{(1)}$ and $Q^{(2)}$ above coincide. Therefore, for any $k>0$ there are $2^{k-1}$ points, whose $x$-coordinate belongs to the level $k$ of the tree. Moreover, if $Q$ is one of such points, also $-Q$ has the same $x$-coordinate and is different from $Q$, unless $Q=(2,0)$ or $(3,0)$, namely $Q$ is one of the two points belonging to the level $2$ of the tree. This amounts to say that there are $\lceil 2^{k-2} \rceil$ vertices at the level $k$ of the tree, if $k\not=2$, and $2$ vertices at the level $2$.

Consider now an element $\tilde{x}$ belonging to the  level $k < e_0$ of the tree rooted in some $x \in V_{A_n}$. Such an $\tilde{x}$ is the $x$-coordinate of a point $Q=(Q_0, Q_1, \dots, Q_w)$ in $R/(\pi_5^n-1) R$, for some $Q_0$ as in (\ref{q0}) or $Q_0 = 0$. The equation $z+z^{-1} = \tilde{x}$ is satisfied for at most two $z$ in $\F_{5^n}$, which are the $x$-coordinate of two points in $R/(\pi_5^n-1)R$,
\begin{equation*}
\widetilde{Q}^{(1)} = (\widetilde{Q}_0^{(1)}, \widetilde{Q}_1, \dots, \widetilde{Q}_w) \quad \widetilde{Q}^{(2)} = (\widetilde{Q}_0^{(2)}, \widetilde{Q}_1, \dots, \widetilde{Q}_w),
\end{equation*}
where 
\begin{eqnarray*}
\widetilde{Q}_0^{(1)} & = & [\rho]^{e_0-k-1} + \sum_{i=e_0-k}^{e_0-2} j_{i+1} [\rho]^i \\
\widetilde{Q}_0^{(2)} & = & [\rho]^{e_0-k-1} + \sum_{i=e_0-k}^{e_0-2} j_{i+1} [\rho]^i + [\rho]^{e_0-1}
\end{eqnarray*}
and
\begin{equation*}
\widetilde{Q}_i  =  [\rho]^{-1} Q_i, \quad \text {if $1 \leq i \leq w$}.
\end{equation*}

We note that $[\rho] \widetilde{Q}^{(1)} = [\rho] \widetilde{Q}^{(2)} = Q$.

If $k=0$, then just the point $\widetilde{Q}^{(1)}$ belongs to the tree, proving that the root of the tree has one and only one child.

If $k \geq 1$ and at least one of the $P_i$ is non-zero, then $\widetilde{Q}^{(1)} \not = - \widetilde{Q}^{(2)}$, hence $\widetilde{Q}^{(1)}$ and $\widetilde{Q}^{(2)}$ have different $x$-coordinates. This implies that each vertex at non-zero level $k$ of a tree rooted in $x \in V_{A_n} \backslash \{ \infty \}$ has two children.

Suppose now that all the $P_i$ are zero. In this case $x= \infty$.
If $k = 1$, then $\widetilde{Q}_0^{(1)} = - \widetilde{Q}_0^{(1)}$ and also $\widetilde{Q}_0^{(2)} = - \widetilde{Q}_0^{(2)}$. In fact, since
\begin{eqnarray*}
\widetilde{Q}_0^{(1)} & = & [\rho]^{e_0-2}\\
\widetilde{Q}_0^{(2)} & = & [\rho]^{e_0-2}+[\rho]^{e_0-1},
\end{eqnarray*}
then
\begin{eqnarray*}
\widetilde{Q}_0^{(1)} + \widetilde{Q}_0^{(1)} & = & [2] [\rho]^{e_0-2} = [\overline{\rho}] [\rho]^{e_0-1} = [i] [\rho]^{e_0} = 0\\
\widetilde{Q}_0^{(2)} + \widetilde{Q}_0^{(2)} & = & [2] [\rho]^{e_0-2}+[2][\rho]^{e_0-1} = 0.
\end{eqnarray*}
Hence, the two points at the level $2$ have not the same $x$ coordinate, implying the the only vertex at the level $1$ has two children.

Consider now the case of $k=2$. Then,
\begin{eqnarray*}
\widetilde{Q}_0^{(1)} & = & [\rho]^{e_0-3} +  j_{e_0-1} [\rho]^{e_0-2}\\
\widetilde{Q}_0^{(2)} & = & [\rho]^{e_0-3} +  j_{e_0-1} [\rho]^{e_0-2} + [\rho]^{e_0-1}.
\end{eqnarray*}
We want to prove that $\widetilde{Q}_0^{(1)} = - \widetilde{Q}_0^{(2)}$. In fact,
\begin{eqnarray*}
\widetilde{Q}_0^{(1)} + \widetilde{Q}_0^{(2)} & = & [\overline{\rho}][\rho] ([\rho]^{e_0-3} +  j_{e_0-1} [\rho]^{e_0-2})  + [\rho]^{e_0-1} \\
& = & [\overline{\rho}]([\rho]^{e_0-2} +  j_{e_0-1} [\rho]^{e_0-1}) + [\rho]^{e_0-1}\\
& = & [\rho+\rho^2] ([\rho]^{e_0-2} +  j_{e_0-1} [\rho]^{e_0-1}) + [\rho]^{e_0-1}\\
& = &  [\rho]^{e_0-1} +  [\rho]^{e_0-1} = 0.
\end{eqnarray*}
Therefore, each of the vertices at the level $2$ has exactly $1$ child.

Finally, if $k > 2$ and  all the $P_i$ are zero, then $\widetilde{Q}^{(1)} \not = - \widetilde{Q}^{(2)}$. In fact,
\begin{eqnarray*}
\widetilde{Q}_0^{(1)} + \widetilde{Q}_0^{(2)} & = & [\overline{\rho}][\rho] \left([\rho]^{e_0-k-1} + \sum_{i=e_0-k}^{e_0-2} j_{i+1} [\rho]^i \right)  + [\rho]^{e_0-1} \\
& = & [\overline{\rho}] \left([\rho]^{e_0-k}  + \sum_{i=e_0-k+1}^{e_0-1} j_{i} [\rho]^i \right)  + [\rho]^{e_0-1}\\
& = & [\rho+\rho^2] \left([\rho]^{e_0-k}  + \sum_{i=e_0-k+1}^{e_0-1} j_{i} [\rho]^i \right)  + [\rho]^{e_0-1}\\
& = &  [\rho]^{e_0-k+1} + [\rho]^{e_0-k+2} + j_{e_0-k+1} [\rho]^{e_0-k+2} \\
& & + \sum_{i=e_0-k+3}^{e_0-1} +  (j_{i-1}+j_{i-2}) [\rho]^{i} + [\rho]^{e_0-1} \not = 0,
\end{eqnarray*}
since $k \geq 3$.
Hence, each of the vertices at the levels $k>2$ of the tree rooted in $\infty$ have two children.

As regards the number $e_0$, we remind that the greatest power of $2$ which divides $N(\pi_5^n-1)$ is $2^2$ if $n$ is odd and $2^{3+2l}$ in the other cases. Moreover, $R/(\pi_5^n-1) R$ is isomorphic to a product of rings as in (\ref{orb4}). 
Since $N(\rho) = 2$, then ${e_0}$ is the greatest power of 2 which divides $N(\pi_5^n-1)$.

We know that $\theta$ maps points of $A_n$ to $A_n$ and points of $B_n$ to $B_n$. We just have to deal with the points of $C_n = \{1, -1 \}$, when $n$ is odd. For these two points we have that $\theta(1)=2$ and $\theta(-1)=-2 \equiv 3 \pmod{5}$, namely $1$ and $-1$ belongs to the tree rooted in $\infty$. We note that $x+x^{-1} = 1$ if and only if $x^2-x+1=0$ and that $x+x^{-1} = -1$ if and only if $x^2+x+1=0$. The equations 
$x^2 \pm x+1 =  0$
have a solution $x$ in the field $\F_{5^n}$ if and only if $n$ is even. Therefore, there exists $x$  in $\F_{5^n}$ such that $x+x^{-1}=  1$ (or $-1$) if and only if $n$ is even. Therefore, when $n$ is odd, the tree rooted in $\infty$ has depth $3$ and is as follows:
\begin{center}
\begin{picture}(30, 60)(-20,-10)
	\unitlength=4.6pt
    \gasset{Nw=3.6,Nh=3.6,Nmr=1.8,curvedepth=0}
    \thinlines
    \footnotesize
    \node(N1)(0,0){$\infty$}
    \node(N2)(0,10){0}
    \node(N3)(-5,20){2}
    \node(N4)(-5,30){1}
    \node(N5)(5,20){3}
    \node(N6)(5,30){4}
    \drawedge(N6,N5){}
    \drawedge(N5,N2){}
    \drawedge(N2,N1){}
    \drawedge(N4,N3){}
    \drawedge(N3,N2){}
    \drawloop[loopangle=-90](N1){}
\end{picture}
\end{center}
 
\end{proof}

\subsection{Trees rooted in elements of $V_{B_n}$}
The following Lemma characterizes the reversed trees having root in $V_{B_n}$.
\begin{lemma}\label{orb_3}
Any element $x \in V_{B_n}$ is the root of a reversed binary tree with the following properties.
\begin{itemize}
\item There are $2^{k-1}$ vertices at any level $k > 0$ of the tree.  Moreover, the root has one child, while all other vertices have two children.
\item The depth of the tree is $2$ if $n$ is even and $3$ if $n$ is odd.
\end{itemize}
\end{lemma}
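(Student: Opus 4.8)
The plan is to run the argument of Lemma~\ref{orb_2} almost verbatim, now working with the isomorphism $\widetilde{\psi}: E(\F_{5^{2n}})_{B_n}\cup E(\F_{5^{2n}})_{B_n}^{*}\to R/(\pi_5^{n}+1)R$ and the decomposition (\ref{orb4}) of the right-hand side, whose $\rho$-primary factor is $R/\rho^{e_0}R$ with $e_0$ the exponent of $\rho$ in $\pi_5^{n}+1$. First I would pin down $e_0$, the analogue of what Lemma~\ref{normpi} does for $A_n$. Using $\pi_5=1+2i$, $\rho=-1+i$ and $2=\rho\overline{\rho}=i\rho^{2}$ one gets the identities $\pi_5-1=-\rho^{2}$ and $\pi_5+1=\rho^{3}$ (hence also $\pi_5^{2}-1=-\rho^{5}$). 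If $n$ is odd, factor $\pi_5^{n}+1=(\pi_5+1)\sum_{k=0}^{n-1}(-1)^{k}\pi_5^{k}$; modulo $\rho$ one has $\pi_5\equiv 1$ and $-1\equiv 1$, so the cofactor is $\equiv n\equiv 1\pmod\rho$, $\rho$ does not divide it, and $e_0=3$. If $n=2m$ is even, write $\pi_5^{2m}=(1-\rho^{5})^{m}=1-m\rho^{5}+\binom{m}{2}\rho^{10}-\cdots$, so that $\pi_5^{2m}+1=2-m\rho^{5}+\cdots=\rho^{2}(i-m\rho^{3}+\cdots)$ with the parenthesised factor $\equiv i\equiv 1\pmod\rho$; hence $e_0=2$, which (unlike in the $A_n$ case) does not grow with the $2$-adic valuation of $n$. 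Equivalently, the highest power of $2$ dividing $N(\pi_5^{n}+1)$ is $2^{3}$ for $n$ odd and $2^{2}$ for $n$ even.

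With $e_0$ in hand the count of vertices and children is exactly the ``$x\neq\infty$'' portion of Lemma~\ref{orb_2}, carried out inside $R/(\pi_5^{n}+1)R$. Fix $x\in V_{B_n}$; being periodic, $\widetilde{\psi}(x,y)=(0,P_1,\dots,P_w)$ has vanishing $\rho$-component, and $(P_1,\dots,P_w)\neq(0,\dots,0)$: indeed $x\in B_n$ forces $(x,y)\in E(\F_{5^{2n}})_{B_n}$, which contains neither $O$ nor any of the $2$-power torsion points collected in $E(\F_{5^{2n}})_{B_n}^{*}$, so the image has nonzero odd part. A vertex at level $k$ ($1\le k\le e_0$) of the reversed tree corresponds to a point $Q=(Q_0,Q_1,\dots,Q_w)$ with $[\rho]^{k}Q_0=0$, $[\rho]^{k-1}Q_0\neq 0$, and either $[\rho]^{k}Q_i=P_i$ for all $i$ or $[\rho]^{k}Q_i=-P_i$ for all $i$; by (\ref{q0}) there are $2^{k-1}$ admissible $Q_0$ and two choices of sign, i.e.\ $2^{k}$ such $Q$, occurring in pairs $\{Q,-Q\}$ of equal $x$-coordinate with $Q\neq -Q$ (as $(P_1,\dots,P_w)\neq 0$), whence $2^{k-1}$ vertices at level $k$. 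To count children one solves $z+z^{-1}=\tilde x$: the two preimages $\widetilde{Q}^{(1)},\widetilde{Q}^{(2)}$ share the unique odd-part preimage of $Q$ and differ in the $\rho$-component by the order-$2$ kernel of $[\rho]$ on $R/\rho^{e_0}R$. At the root ($Q_0=0$) one of the two preimages again has vanishing $\rho$-component, hence is periodic and lies on the cycle rather than in the tree, so the root has exactly one child; at any level $k\ge 1$ one has $\widetilde{Q}^{(1)}\neq -\widetilde{Q}^{(2)}$, since an equal nonzero component $\widetilde{Q}_i^{(1)}=\widetilde{Q}_i^{(2)}$ lying in a group of odd order cannot be its own negative, so the two children have distinct $x$-coordinates. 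Finally the level-$e_0$ vertices have $Q_0=1+\sum_{i=1}^{e_0-1}j_i[\rho]^{i}$, a unit modulo $\rho$, hence no $[\rho]$-preimage in $R/\rho^{e_0}R$ and no children; so the depth is exactly $e_0$, that is $3$ when $n$ is odd and $2$ when $n$ is even.

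Two points keep this cleaner than Lemma~\ref{orb_2}. There is no exceptional vertex here: $\infty\notin B_n$, and the elements of $C_n$ (when $n$ is odd) are sent by $\theta$ into the tree rooted at $\infty\in A_n$, not into any tree over $B_n$. Moreover every $Q$ produced above genuinely has $x$-coordinate in $B_n$, because it has the shape $(Q_0,\pm[\rho]^{-k}P_1,\dots,\pm[\rho]^{-k}P_w)$ with nonzero odd part and is therefore none of the $2$-power torsion points of $E(\F_{5^{2n}})_{B_n}^{*}$; so no separate treatment of the ``extra'' points is needed. I expect the only genuine obstacle to be the valuation computation of the first step --- in particular checking that the exponent of $\rho$ in $\pi_5^{n}+1$ equals $2$ uniformly for every even $n$ --- the tree bookkeeping then transferring word for word from the $A_n$ analysis once the role of $\infty$ has been removed.
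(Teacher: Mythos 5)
Your proposal is correct, and the tree-counting part coincides with what the paper does: its own proof of this lemma transfers the non-$\infty$ portion of Lemma~\ref{orb_2} to $R/(\pi_5^n+1)R$ after observing that $(P_1,\dots,P_w)\neq(0,\dots,0)$ and that $Q\neq -Q$ because $\tilde{y}\in\F_{5^{2n}}\setminus\F_{5^n}$ is nonzero. Where you genuinely diverge is in pinning down $e_0$, the exponent of $\rho$ in $\pi_5^n+1$. The paper gets it indirectly: it invokes the norm identity $N(\pi_5^n+1)\,N(\pi_5^n-1)=N(\pi_5^{2n}-1)$, feeds in Lemma~\ref{normpi} (the binomial-expansion computation of the $2$-adic valuation of $N(\pi_5^n-1)$) for both $n$ and $2n$, and divides, concluding that the relevant power of $2$ is $2^3$ for $n$ odd and $2^2$ for $n$ even. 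You instead factor directly in $\Z[i]$ via $\pi_5-1=-\rho^2$, $\pi_5+1=\rho^3$, $\pi_5^2-1=-\rho^5$, and reduce the cofactor modulo $\rho$; the identities and the congruences $\pi_5\equiv 1$, $-1\equiv 1$, $i\equiv 1\pmod{\rho}$ all check out and yield the same $e_0=3$ (odd $n$) and $e_0=2$ (even $n$). Your route is self-contained and arguably cleaner for this lemma, since it bypasses Lemma~\ref{normpi} entirely; the paper's route has the advantage of reusing a computation it already needed for the $A_n$ trees. Your additional remarks --- that the root has one child because one of its two $\theta$-preimages is periodic, that level-$e_0$ points have no $[\rho]$-preimage so the depth is exactly $e_0$, and that none of the constructed points falls into $E(\F_{5^{2n}})_{B_n}^*$ --- are details the paper leaves implicit in its appeal to Lemma~\ref{orb_2}, and they are all correct.
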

\begin{proof}
The proof is very similar to the proof of Lemma \ref{orb_2}.

For a fixed element $x \in V_{B_n}$, let $(0, P_1, \dots, P_w) \in R / (\pi_5^{n}+1) R$ be one of the (at most two) points with such an $x$-coordinate (here we use the isomorphic representation of $R / (\pi_5^{n}+1) R$ given by (\ref{orb4})). We notice that not all $P_i$ can be zero, since that would imply that $x \in V_{A_n}$.  An element $\tilde{x} \in \F_{5^n}$ belongs to the non-zero level $k$ of the reversed binary tree rooted in $x$ if and only if $\theta^k(\tilde{x}) = x$, $\theta^i (\tilde{x}) \not = x$ and none of the $\theta^i (\tilde{x})$ is periodic for any $i < k$. Since $\theta$ maps elements of $A_n$ to $A_n$ and elements of $B_n$ to $B_n$, then $\tilde{x} \in B_n$. Therefore there exists $\tilde{y} \in \F_{5^{2n}} \backslash \F_{5^{n}}$ such that $(\tilde{x}, \tilde{y}) \in E (\F_{5^{2n}})_{B_n}$ and $\widetilde{\psi}(\tilde{x}, \tilde{y}) = Q = (Q_0, Q_1, \dots, Q_w)$, where $Q_0 \not =0$. Since $[\rho]^{e_0} Q_0 = 0$ in $R / \rho^{e_0} R$ we have that $k \leq e_0$.

For a fixed positive integer $k \leq e_0$ we aim to find all the points $(Q_0, Q_1,  \dots, Q_w)$ in $R /(\pi_5^{n}-1) R $ such that 
\begin{enumerate}
\item $[\rho]^k Q_0  =  0$ and $[\rho]^{k-1} Q_0 \not = 0$ ;
\item $[\rho]^k Q_i  =   P_i$ for any $1 \leq i \leq w$, or $[\rho]^k Q_i  =   -P_i $ for any $1 \leq i \leq w$. 
\end{enumerate}
The first condition is satisfied if and only if
\begin{equation}\label{q1}
Q_0  = [\rho]^{e_0-k} + \sum_{i=e_0-k+1}^{e_0-1} j_i [\rho]^i, 
\end{equation}
where each $j_i \in \left\{ 0, 1 \right\}$.
The second condition is satisfied if and only if 
\begin{equation*}
Q_i  =  [\rho]^{-k} P_i \text{, for any $i$,}  
\end{equation*}
or
\begin{equation*}
Q_i  =  -[\rho]^{-k} P_i \text{, for any $i$.} 
\end{equation*} 

Since not all $P_i$ are zero, once the values of $j_i$ for $e_0-k+1 \leq i \leq e_0-1$ have been fixed, there are  two possibilities for $Q$, namely
\begin{eqnarray*}
Q^{(1)} & = & (Q_0, [\rho]^{-k} P_1, \dots, [\rho]^{-k} P_w) \quad \textrm{or}  \\
Q^{(2)} & = & (Q_0, -[\rho]^{-k} P_1, \dots, -[\rho]^{-k} P_w) .
\end{eqnarray*}
Therefore, for a fixed positive integer $k$ there are $2^k$ points $Q$, whose $x$-coordinate belongs to the level $k$ of the tree. If $Q = \widetilde{\psi}(\tilde{x}, \tilde{y})$ is one of such points, then $-Q=\widetilde{\psi}(\tilde{x}, -\tilde{y})$ has the same $x$-coordinate. Moreover, $Q \not = -Q$, because $\tilde{y} \not = 0$, being $\tilde{y} \in \F_{5^{2n}} \backslash \F_{5^n}$. Hence, for any $x \in V_{B_n}$ there are $2^{k-1}$ vertices at the level $k$ of the reversed binary tree rooted in $x$.

Suppose now that $n$ is odd. The greatest power of $2$ which divides $N(\pi_5^n+1)$ is $2^3$. In fact,
\begin{equation}\label{q2}
N(\pi_5^n+1) N(\pi_5^n-1) = N(\pi_5^{2n}-1),
\end{equation}
the greatest power of $2$ dividing $N(\pi_5^n-1)$ is $2^2$, while the greatest power of $2$ dividing the right hand side of the equation is $2^5$. These facts imply that $2^3$ is the greatest power of $2$ which divides $N(\pi_5^n+1)$.

Suppose on the contrary that $n$ is even. Then there exists some positive integer $l$ such that
\begin{displaymath}
\begin{array}{llll}
N(\pi_5^n-1) & = & 2^{3+2l} \cdot r & \text{for some odd integer $r$}\\
N(\pi_5^{2n}-1) & = & 2^{3+2(l+1)} \cdot s & \text{for some odd integer $s$}.
\end{array}
\end{displaymath} 
We deduce that $2^2$ is the greatest power of $2$ dividing $N(\pi_5^n+1)$.

Since the greatest power of $2$ dividing $N(\pi_5^n+1)$ is equal to $e_0$, the depth of the tree is $2$ if $n$ is even and $3$ if $n$ is odd.
\end{proof}

\subsection{An example: the graph associated with $\theta$ over the field $\F_{5^3}$} 
The field with $125$ elements can be constructed as the splitting field over $\F_5$ of the Conway polynomial $x^3-2x-2$. In particular, if $\alpha$ denotes a root of such a polynomial,
\begin{equation*}
\Pro(\F_{5^3}) = \left\{ \alpha^i: 0 \leq i \leq 123 \right\}  \cup \{ 0 \}  \cup \left\{ \infty \right\}.
\end{equation*}
In the following pages  the $4$ connected components of the graph are represented. The labels of the nodes are the exponents of the powers $\alpha^i$, for $0 \leq i \leq 123$, the zero element (denoted by `0') and the point $\infty$. It can be noticed that the first, second and fourth component correspond to the elements belonging to $A_3$. In particular we notice that, in accordance with Lemma \ref{orb_2}, being $n=3$ odd, each cycle is the root of a binary tree of depth $2$, except for the tree rooted in $\infty$, which has depth $3$. The third connected component corresponds to the elements belonging to $B_3$ and each node in the cycle is the root of a binary tree of depth $3$ as claimed in Lemma \ref{orb_3}.

As regards the length of the cycles of $C_{A_3}$, we notice that $\pi_5^3-1 = -12-2i$, which decomposes in primes of $\Z[i]$ as $\rho^2 \cdot (1-6i)$. Therefore,
\begin{displaymath}
R / (\pi_5^3-1) R \cong R / \rho^2 R \times R / (1-6 i) R. 
\end{displaymath} 
Consider a point $(0,P) \not = (0,0)$ belonging to the isomorphic representation of $R / (\pi_5^3-1) R$. The element $P$ has additive order $37$ in $R/(1-6i) R$. The length of the cycle containing $(0,P)$ is $l$, where $l$ is the smallest among the positive integers $k$ such that either $\rho^k+1$ or $\rho^k-1$ is divisible by $(1-6i)$ in $R=\Z[i]$. The integer $l$ is equal to $9$. Since there are $36$ points as $(0,P)$, 
then there are $\dfrac{36}{2 \cdot 9} = 2$ cycles of length $9$.

Consider now the cycles of $C_{B_3}$. We notice that $\pi_5^3+1 = -10 - 2 i$, which decomposes in prime of $\Z[i]$ as $\rho^3 \cdot (-3+2i)$. Therefore,
\begin{displaymath}
R / (\pi_5^3+1) R \cong R / \rho^3 R \times R / (-3+2i) R.
\end{displaymath} 
Consider a point $(0,P) \not = (0,0)$ belonging to the isomorphic representation of $R / (\pi_5^3+1) R$. The element $P$ has additive order $13$ in $R/(-3+2i) R$. The length of the cycle containing $(0,P)$ is $l = 6$, since $l$ is the smallest among the positive integers $k$ such that either $\rho^k+1$ or $\rho^k-1$ is divisible by $(-3+2i)$ in $R=\Z[i]$.  The $12$ points with the same characteristics of $(0,P)$ form $\dfrac{12}{2 \cdot 6} = 1$ cycle of length $6$.

\newpage
\begin{example}
\begin{center}
    \unitlength=4.6pt
    \begin{picture}(70, 70)(-35,-35)
    \gasset{Nw=3.6,Nh=3.6,Nmr=1.8,curvedepth=-0.5}
    \thinlines
    \footnotesize
    \node(N1)(10,0){$78$}
    \node(N2)(7.66,6.42){$72$}
    \node(N3)(1.73,9.84){$110$}
    \node(N4)(-5,8.66){$90$}
    \node(N5)(-9.39,3.42){$64$}
    \node(N6)(-9.39,-3.42){$22$}
    \node(N7)(-5,-8.66){$18$}
    \node(N8)(1.73,-9.84){$112$}
    \node(N9)(7.66,-6.42){$54$}
    
    \node(N11)(20,0){$70$}
    \node(N12)(15.32,12.84){$46$}
    \node(N13)(3.46,19.68){$52$}
    \node(N14)(-10,17.32){$14$}
    \node(N15)(-18.78,6.84){$34$}
    \node(N16)(-18.78,-6.84){$60$}
    \node(N17)(-10,-17.32){$102$}
    \node(N18)(3.46,-19.68){$106$}
    \node(N19)(15.32,-12.84){$12$}

 	\node(N101)(29.54,5.2){$1$}
    \node(N102)(25.98,15.0){$107$}
    \node(N103)(19.28,22.98){$17$}
    \node(N104)(10.26,28.19){$49$}
    \node(N105)(0,30){$75$}
    \node(N106)(-10.26,28.19){$25$}
    \node(N107)(-19.28,22.98){$99$}
    \node(N108)(-25.98,15){$53$}
    \node(N109)(-29.54,5.2){$71$}
    \node(N111)(-29.54,-5.2){$15$}
    \node(N112)(-25.98,-15){$109$}
    \node(N113)(-19.28,-22.98){$5$}
    \node(N114)(-10.26,-28.19){$119$}
    \node(N115)(0,-30){$39$}
    \node(N116)(10.26,-28.19){$85$}
    \node(N117)(19.28,-22.98){$3$}
    \node(N118)(25.98,-15){$121$}
    \node(N119)(29.54,-5.2){$123$}

    \drawedge(N1,N2){}
    \drawedge(N2,N3){}
    \drawedge(N3,N4){}
    \drawedge(N4,N5){}
    \drawedge(N5,N6){}
    \drawedge(N6,N7){}
    \drawedge(N7,N8){}
    \drawedge(N8,N9){}
    \drawedge(N9,N1){}
    
    \gasset{curvedepth=0}
     
    \drawedge(N11,N1){}
    \drawedge(N12,N2){}
    \drawedge(N13,N3){}
    \drawedge(N14,N4){}
    \drawedge(N15,N5){}
    \drawedge(N16,N6){}
    \drawedge(N17,N7){}
    \drawedge(N18,N8){}
    \drawedge(N19,N9){}
    
    \drawedge(N101,N11){}
    \drawedge(N102,N12){}
    \drawedge(N103,N12){}
    \drawedge(N104,N13){}
    \drawedge(N105,N13){}
    \drawedge(N106,N14){}
    \drawedge(N107,N14){}
    \drawedge(N108,N15){}
    \drawedge(N109,N15){}
    \drawedge(N111,N16){}
    \drawedge(N112,N16){}
    \drawedge(N113,N17){}
    \drawedge(N114,N17){}
    \drawedge(N115,N18){}
    \drawedge(N116,N18){}
    \drawedge(N117,N19){}
    \drawedge(N118,N19){}
    \drawedge(N119,N11){}
\end{picture}
\begin{picture}(70, 70)(-35,-35)
    \gasset{Nw=3.6,Nh=3.6,Nmr=1.8,curvedepth=-0.5}
    \thinlines
    \footnotesize
    \node(N1)(10,0){$2$}
    \node(N2)(7.66,6.42){$84$}
    \node(N3)(1.73,9.84){$80$}
    \node(N4)(-5,8.66){$50$}
    \node(N5)(-9.39,3.42){$116$}
    \node(N6)(-9.39,-3.42){$16$}
    \node(N7)(-5,-8.66){$10$}
    \node(N8)(1.73,-9.84){$48$}
    \node(N9)(7.66,-6.42){$28$}
    
    \node(N11)(20,0){$96$}
    \node(N12)(15.32,12.84){$122$}
    \node(N13)(3.46,19.68){$40$}
    \node(N14)(-10,17.32){$44$}
    \node(N15)(-18.78,6.84){$74$}
    \node(N16)(-18.78,-6.84){$8$}
    \node(N17)(-10,-17.32){$108$}
    \node(N18)(3.46,-19.68){$114$}
    \node(N19)(15.32,-12.84){$76$}

 	\node(N101)(29.54,5.2){$9$}
    \node(N102)(25.98,15.0){$77$}
    \node(N103)(19.28,22.98){$47$}
    \node(N104)(10.26,28.19){$67$}
    \node(N105)(0,30){$57$}
    \node(N106)(-10.26,28.19){$23$}
    \node(N107)(-19.28,22.98){$101$}
    \node(N108)(-25.98,15){$59$}
    \node(N109)(-29.54,5.2){$65$}
    \node(N111)(-29.54,-5.2){$61$}
    \node(N112)(-25.98,-15){$63$}
    \node(N113)(-19.28,-22.98){$45$}
    \node(N114)(-10.26,-28.19){$79$}
    \node(N115)(0,-30){$13$}
    \node(N116)(10.26,-28.19){$111$}
    \node(N117)(19.28,-22.98){$37$}
    \node(N118)(25.98,-15){$87$}
    \node(N119)(29.54,-5.2){$115$}

    \drawedge(N1,N2){}
    \drawedge(N2,N3){}
    \drawedge(N3,N4){}
    \drawedge(N4,N5){}
    \drawedge(N5,N6){}
    \drawedge(N6,N7){}
    \drawedge(N7,N8){}
    \drawedge(N8,N9){}
    \drawedge(N9,N1){}
    
    \gasset{curvedepth=0}
     
    \drawedge(N11,N1){}
    \drawedge(N12,N2){}
    \drawedge(N13,N3){}
    \drawedge(N14,N4){}
    \drawedge(N15,N5){}
    \drawedge(N16,N6){}
    \drawedge(N17,N7){}
    \drawedge(N18,N8){}
    \drawedge(N19,N9){}
    
    \drawedge(N101,N11){}
    \drawedge(N102,N12){}
    \drawedge(N103,N12){}
    \drawedge(N104,N13){}
    \drawedge(N105,N13){}
    \drawedge(N106,N14){}
    \drawedge(N107,N14){}
    \drawedge(N108,N15){}
    \drawedge(N109,N15){}
    \drawedge(N111,N16){}
    \drawedge(N112,N16){}
    \drawedge(N113,N17){}
    \drawedge(N114,N17){}
    \drawedge(N115,N18){}
    \drawedge(N116,N18){}
    \drawedge(N117,N19){}
    \drawedge(N118,N19){}
    \drawedge(N119,N11){}
\end{picture}
\end{center}
\begin{center}
\begin{picture}(120, 160)(-60,-78)
	\unitlength=4.6pt
    \gasset{Nw=3.6,Nh=3.6,Nmr=1.8,curvedepth=-1.5}
    \thinlines
    \footnotesize
    \node(N1)(10,0){$117$}
    \node(N2)(5,8.66){$27$}
    \node(N3)(-5,8.66){$73$}
    \node(N4)(-10,0){$55$}
    \node(N5)(-5,-8.66){$89$}
    \node(N6)(5,-8.66){$11$}
    
    \node(N11)(20,0){$113$}
    \node(N12)(10,17.32){$7$}
    \node(N13)(-10,17.32){$97$}
    \node(N14)(-20,0){$51$}
    \node(N15)(-10,-17.32){$69$}
    \node(N16)(10,-17.32){$35$}

 	\node(N21)(28.97,7.76){$29$}
    \node(N22)(21.21,21.21){$83$}
    \node(N23)(7.76,28.97){$41$}
    \node(N24)(-7.76,28.97){$19$}
    \node(N25)(-21.21,21.21){$105$}
    \node(N26)(-28.97,7.76){$91$}
    \node(N27)(-28.97,-7.76){$33$}
    \node(N28)(-21.21,-21.21){$21$}
    \node(N29)(-7.76,-28.97){$103$}
    \node(N30)(7.76,-28.97){$43$}
    \node(N31)(21.21,-21.21){$81$}
    \node(N32)(28.97,-7.76){$95$}

    \node(N41)(39.65,5.22){$36$}
    \node(N42)(36.95,15.30){$88$}
    \node(N43)(31.73,24.35){$6$}
    \node(N44)(24.35,31.73){$118$}
    \node(N45)(15.31,36.95){$42$}
    \node(N46)(5.22,39.65){$82$}
    \node(N47)(-5.22,39.65){$24$}
    \node(N48)(-15.31,36.95){$100$}
    \node(N49)(-24.35,31.73){$32$}
    \node(N50)(-31.73,24.35){$92$}
    \node(N51)(-36.95,15.30){$26$}
    \node(N52)(-39.65,5.22){$98$}
    \node(N53)(-39.65,-5.22){$58$}
    \node(N54)(-36.95,-15.3){$66$}
    \node(N55)(-31.73,-24.35){$56$}
    \node(N56)(-24.35,-31.73){$68$}
    \node(N57)(-15.31,-36.95){$104$}
    \node(N58)(-5.22,-39.65){$20$}
    \node(N59)(5.22,-39.65){$30$}
    \node(N60)(15.31,-36.95){$94$}
    \node(N61)(24.35,-31.73){$38$}
    \node(N62)(31.73,-24.35){$86$}
    \node(N63)(36.95,-15.3){$4$}
    \node(N64)(39.65,-5.22){$120$}
    
    \drawedge(N1,N2){}
    \drawedge(N2,N3){}
    \drawedge(N3,N4){}
    \drawedge(N4,N5){}
    \drawedge(N5,N6){}
    \drawedge(N6,N1){}

    \gasset{curvedepth=0}
     
    \drawedge(N11,N1){}
    \drawedge(N12,N2){}
    \drawedge(N13,N3){}
    \drawedge(N14,N4){}
    \drawedge(N15,N5){}
    \drawedge(N16,N6){}
    
    \drawedge(N21,N11){}
    \drawedge(N22,N12){}
    \drawedge(N23,N12){}
    \drawedge(N24,N13){}
    \drawedge(N25,N13){}
    \drawedge(N26,N14){}
    \drawedge(N27,N14){}
    \drawedge(N28,N15){}
    \drawedge(N29,N15){}
    \drawedge(N30,N16){}
    \drawedge(N31,N16){}
    \drawedge(N32,N11){}
    
    \drawedge(N41,N21){}
    \drawedge(N42,N21){}
    \drawedge(N43,N22){}
    \drawedge(N44,N22){}
    \drawedge(N45,N23){}
    \drawedge(N46,N23){}
    \drawedge(N47,N24){}
    \drawedge(N48,N24){}
    \drawedge(N49,N25){}
    \drawedge(N50,N25){}
    \drawedge(N51,N26){}
    \drawedge(N52,N26){}
    \drawedge(N53,N27){}
    \drawedge(N54,N27){}
    \drawedge(N55,N28){}
    \drawedge(N56,N28){}
    \drawedge(N57,N29){}
    \drawedge(N58,N29){}
    \drawedge(N59,N30){}
    \drawedge(N60,N30){}
    \drawedge(N61,N31){}
    \drawedge(N62,N31){}
    \drawedge(N63,N32){}
    \drawedge(N64,N32){}
%

\end{picture}
\begin{picture}(30, 60)(-20,-0)
	\unitlength=4.6pt
    \gasset{Nw=3.6,Nh=3.6,Nmr=1.8,curvedepth=0}
    \thinlines
    \footnotesize
    \node(N1)(0,0){$\infty$}
    \node(N2)(0,10){`0'}
    \node(N3)(-5,20){$31$}
    \node(N4)(-5,30){$0$}
    \node(N5)(5,20){$93$}
    \node(N6)(5,30){$62$}
    \drawedge(N6,N5){}
    \drawedge(N5,N2){}
    \drawedge(N2,N1){}
    \drawedge(N4,N3){}
    \drawedge(N3,N2){}
    \drawloop[loopangle=-90](N1){}
\end{picture}
\end{center}
\end{example}  
\section{A note about the construction of the graphs}
The Examples of this paper have been manually constructed for the sake of clearness. 
In this Section we describe a possible procedure for obtaining those and other examples using GAP \cite{gap} and Graphviz \cite{gvz}.  Details about GAP, Graphviz and DOT language can be found in the official websites of GAP and Graphviz.

The following GAP function generates a \emph{.dot} file encoding all information about the directed graph associated with the map $\theta$ over the field $\F_{p^n}$ for chosen positive integers $p$ and $n$. In particular, the code can be used for constructing graphs in any characteristic $p$.
\lstset{frame=single, breaklines=true, tabsize=2,basicstyle=\ttfamily}
\begin{lstlisting}
graph := function(p,n)
local k, gen,a ,b, j, f;
f:="./graph.dot";
gen:=Z(p^n); 
PrintTo(f, "digraph{ \n");
for j in [0..p^n-2] do
	a:=gen^j;
	b:=((a)+(a)^(-1)); 
	if b = 0*Z(p) then
		 AppendTo(f, j, "-> zero; \n");
	else
		for k in [0..p^n] do
			if gen^k = b then break;fi;
		od;
	AppendTo(f, j, "->", k, "; \n");
	fi;
od;
AppendTo(f, "zero -> inf; \n inf -> inf;}");
end;;
\end{lstlisting}

It can be convenient to filter all connected components of the graph using the tool \verb+ccomps+ included in Graphviz. Once the file \emph{graph.dot} has been created, the data files of the connected components can be generated as follows:
\begin{verbatim}
ccomps -x -o graph_con graph.dot
\end{verbatim}
The $i$-th connected component's file will be named \verb+graph_con_i+. 

Finally, it is possible to give the connected components a layout using one of the Graphviz layout commands as \verb+dot+, \verb+circo+, \verb+neato+ or  \verb+twopi+.
\bibliography{Refs}
\end{document}